\definecolor{Prune}{RGB}{99,0,60}
\newcommand{\N}{\ensuremath{\mathbb{N}}}
\newcommand{\R}{\ensuremath{\mathbb{R}}}
\theoremstyle{plain}
\newtheorem{theorem}{Theorem}[section]
\newtheorem{lemma}[theorem]{Lemma}
\newtheorem{prop}[theorem]{Proposition}
\newtheorem{corollary}[theorem]{Corollary}
\newtheorem{definition}[theorem]{Definition}
\theoremstyle{definition}
\def\0{{\mathbf{0}}}
\renewcommand{\P}{\mathbb{P}}
\newcommand{\E}{\mathbb{E}}
\title{\textbf{About the regularity of degenerate non-local Kolmogorov operators under diffusive perturbations}}
\author[1]{\textbf{L. Marino}}
\author[2]{\textbf{S. Menozzi}}
\author[3]{\textbf{E. Priola}}
\affil[1]{\footnotesize{ Institute of Mathematics,
   Polish Academy
  of Sciences, Jana i J\c edrzeja \'Sniadeckich $8$, $00-656$, Warsaw, Poland.}}
\affil[2]{\footnotesize{ Laboratoire de Mod\'elisation Math\'ematique d'Evry (LaMME), UMR CNRS 8071,
 Universit\'e Paris-Saclay, Universit\'e d'Evry Val d'Essonne, $23$
Boulevard de France $91037$ Evry, France.}}
\affil[3]{\footnotesize Dipartimento di Matematica, Universit\`a di Pavia, Via Adolfo Ferrata $5$, $27100$ Pavia,
Italy.}
\begin{document}
\maketitle
\begin{abstract}
We study here the effects of a time-dependent second order perturbation to a degenerate Ornstein-Uhlenbeck type operator whose diffusive part can be either local or non-local. More precisely, we establish that some estimates, such as the Schauder and Sobolev ones, already known for the non-perturbed operator still hold, and with the same constants, when we perturb the Ornstein-Uhlenbeck operator with second order diffusions with coefficients only depending on time in a measurable way.
The aim of the current work is two-fold: we weaken the assumptions required on the perturbation in the local case which has been considered already in \cite{Marino:Menozzi:Priola22} and  we extend the approach presented therein to a wider class of degenerate Kolmogorov operators with non-local diffusive part of symmetric stable type.
\end{abstract}

{\small{\textbf{Keywords:} Degenerate  Ornstein-Uhlenbeck operators,  non-local parabolic equations, $L^p$ estimates, Schauder estimates, Poisson process.}}

{\small{\textbf{MSC:} Primary: $35$K$10$, $35$K$15$, $60$J$76$.}}

\setcounter{equation}{0}

\section{Introduction}

\subsection{General aims and scopes}
In this work we are interested in establishing estimates, namely Schauder and $L^p$-type, for some perturbation of degenerate Ornstein-Uhlenbeck operators, which satisfy a Kalman type condition (see assumption \textbf{[K]} below). For non degenerate diffusion operators it has been proved in \cite{Krylov:Priola17} that for some suitable second order perturbations, some well known estimates from the parabolic/elliptic theory remained true with the \textit{very} same constants. This also allowed the authors to prove therein that the Schauder and $L^p$ estimates for the heat equation actually did not depend on the dimension. The strategy therein relies on a probabilistic technique which consists in introducing a random source of Poisson type. Once averaged those discontinuities make appear a finite difference operator in the associated PDE. The final estimates then follow by some compactness arguments. The Poisson process actually allows very naturally to perform those  operations and up to now, to the best of our knowledge, this is the only approach which consents to derive such result. The question to obtain a purely analytic proof of the indicated stability results remains open.

In \cite{Marino:Menozzi:Priola22} we managed to prove that the strategy developed in \cite{Krylov:Priola17} was sufficiently robust to extend to degenerate Ornstein-Uhlenbeck operators satisfying a Kalman condition provided the second order spatial perturbation has coefficients which are continuous in time. We manage to get rid in the current work of this assumption and establish the aforementioned stability under {the mere time measurability and boundedness} for the perturbations. We also extend such approach to the non-local degenerate context. Namely, we consider degenerate stable type
Ornstein-Uhlenbeck operators and perturb these with the same kind of second order diffusion considered above.

\subsection{The model}
\label{Sec:Model}

Let us describe the general framework we are going to consider here. Fixed $N$ in $\N$, we denote by $\mathcal{S}(\R^N)$ the family of symmetric matrix in $\R^N\otimes \R^N$. We can then consider its subset $\mathcal{S}_0(\R^N)$, which stands for the closed convex cone of non-negative definite matrices, and its interior $\mathcal{S}_+(\R^N)$ which corresponds to the open subset of positive definite matrices.

Let us introduce two matrices $A,B$ in $\R^N\otimes \R^N$ such that $B$ belongs to $\mathcal{S}_0(\R^N)$ and satisfying the following Kalman rank condition:
\begin{trivlist}
\item[\textbf{[K]}] There exists a  non-negative integer $k$ such that
\begin{align} \label{kal}
{\rm Rank} [ B,AB,\ldots,
  A^{k} B
] =N,
\end{align}
where $ [B,AB,...,A^{k}B]$  is the $\R^N\otimes \R^{N(k+1)}$ matrix whose blocks are $B,AB,$ $\ldots , A^kB$.
 \end{trivlist}
 It has been shown (cf.\ \cite{Lanconelli:Polidoro94}) that, up to a possible change of variables, the space $\R^N$ can be decomposed as $\R^{d_0}\times \R^{d_1}$ (with $d_0\ge 1$) such that
\begin{equation}
\label{eq:def_B_0}
B  \, = \,  \begin{pmatrix}
 B_0  & 0_{d_0 \times d_1}\\
 0_{d_1 \times d_0} & 0_{d_1 \times d_1}
\end{pmatrix},
\end{equation}
for some matrix $B_0$ in $\mathcal{S}_+(\R^{d_0})$.
Moreover, if we denote by $\kappa_2>0$ the smallest eigenvalue of the matrix $B_0$, it then follows that
\begin{equation}
\label{eq:def_kappa_2}
B_0 \xi\cdot \xi \, \ge \, \kappa_2 |\xi|^2,
\end{equation}
for any $\xi$ in $\R^{d_0}$. Above, ``$\cdot$'' denotes the usual inner product in $\R^{d_0}$. From the non-degeneracy of $B_0$, the above condition \textbf{[K]} amounts to say that the vectors
\[
\{ e_1 , \ldots, e_{ d_0}, A e_1 ,
\ldots, Ae_{ d_0}, \ldots, A^k e_1 , \ldots, A^k
e_{ d_0}\} \;\;\; \mbox{generate} \;\; \R^N,
\]
where $\{e_i\colon i\in \{ 1,\cdots,d_0\}\}$ are the first $d_0$ vectors of  the canonical basis for $\R^N$. For simplicity, let us also denote by $\sigma$ in $\R^N\otimes \R^{d_0}$ the following matrix:
\[ \sigma  \, = \,  \begin{pmatrix}
 \sqrt{B_0} \\
 0_{d_1 \times d_0}
\end{pmatrix},\]
so that $\sigma\sigma^\ast=B$.

Fixed $\alpha$ in $(0,2]$, we can now introduce an operator $\mathcal{L}_\alpha$ on $C^\infty_b(\R^N)$, the space of all the bounded, smooth functions on $\R^N$ with bounded derivatives of any order, given by:
\begin{multline}
\label{eq:def_operator_cal_L}
\mathcal{L}_\alpha\phi(x) \, := \,
\mathds{1}_{\{\alpha =2\}} \text{Tr}(B D^2_x \phi(x)) + \\
\mathds{1}_{\{\alpha\neq 2\}}\int_{\R^{d_0}} \left[\phi(x+\sigma z)-\phi(x)-\langle D_x\phi(x),\sigma z\rangle\mathds{1}_{B(0,1)}(\sigma z)\right] \, \nu_\alpha(dz)
\end{multline}
where $\nu_\alpha$ is a symmetric, non-degenerate $\alpha$-stable measure on $\R^{d_0}$, i.e.
\begin{equation}
\label{eq:def_measure_nu_alpha}
\nu_\alpha(C) \, = \, \int_0^\infty\int_{\mathbb{S}^{{d_0}-1}}\mathds{1}_{C}(r\theta)\mu(d\theta)\frac{dr}{r^{1+\alpha}},\quad C\in\mathcal{B}(\R^{d_0}),
\end{equation}
for a symmetric, finite measure $\mu$ on $\mathbb{S}^{{d_0}-1}$ for which the following \emph{non-degeneracy} condition holds:
\begin{trivlist}
\item[\textbf{[ND]}] there exists $\kappa_\alpha>0 $ such that
\[\int_{\mathbb{S}^{{d_0}-1}}|\lambda\cdot \theta|^\alpha \, \mu(d\theta) \, \ge \, \kappa_\alpha |\lambda|^\alpha, \quad \lambda \in \R^{d_0}.\]
\end{trivlist}
In \eqref{eq:def_operator_cal_L}, $\langle \cdot , \cdot \rangle$ denotes the usual inner product in $\R^N$ and $D^2_x\phi$ represents the full Hessian matrix in $\R^N\otimes\R^N$ with respect to $x$.
From now on, we will say that assumption [\textbf{A}] holds when the above conditions [\textbf{K}] and [\textbf{ND}] are in force.

We will use, as an underlying \textit{proxy} operator, a degenerate Ornstein-Uhlenbeck operator $\mathcal{L}^{\text{ou}}_\alpha$ of the form
\begin{equation}
\label{DEF_OU_OP_PROXY}
\mathcal{L}^{\text{ou}}_\alpha \phi(x) \, := \, \mathcal{L}_\alpha\phi(x) + \langle A x , D_x \phi(x)\rangle,
\end{equation}
for $x$ in $\R^N$.  In the Gaussian case (i.e.\ when $\alpha=2$), assumption \textbf{[K]} (which also often appears in control theory; see e.g.\ \cite{book:Zabczyk95}) is equivalent to the  H\"ormander condition on the
commutators (c.f.\ \cite{Hormander67}) ensuring the hypoellipticity of the operator $\partial_t-\mathcal{L}^{\text{ou}}_\alpha$. In particular, it implies the existence and the smoothness of a distributional solution to the following equation:
\begin{equation}
\label{eq:OU_initial:intro}
\begin{cases}
    \partial_t u(t,x) \, = \,  \mathcal{L}^{\text{ou}}_\alpha u(t,x) +f(t,x) &\mbox{ on }\R^N_T;\\
   u(0,x)\, = \, 0 &\mbox{ on } \R^N,
\end{cases}
\end{equation}
where $\R_T^N:=(0,T)\times \R^N$ for some fixed final time $T>0$ and $f$ is a function in $C_c^\infty(\R^N_T)$, the set of all the smooth functions on $\R^N_T$ with compact support.

From this point further, we assume to have fixed a time dependent matrix $S\colon (0,T)\to \mathcal{S}_0(\R^N)$ which is {bounded measurable} in $(0,T)$. We are then interested in the following perturbation of $\mathcal{L}^{\text{ou}}_\alpha$:
\begin{equation}
\label{eq:def_pert_operator}
\mathcal{L}_{\alpha,t}^{\text{pert}} \phi(x) \, :=\, \mathcal{L}^{\text{ou}}_\alpha \phi(x)+ {\rm Tr}(S(t) D^2_x \phi(x)).
\end{equation}
Our main interest here is to understand how such a perturbation influences some well-known estimates for the Ornstein-Uhlenbeck operator $\mathcal{L}^{\text{ou}}_\alpha$. For example, when considering the diffusive setting $\alpha=2$, Theorem $3$ in \cite{Bramanti:Cupini:Lanconelli:Priola10} (but see also \cite[Section $2.3$]{Marino:Menozzi:Priola22}), showed that for any fixed $p$ in $(1,+\infty)$, there exists $c_p:=c_p(T,A, B, d_0,d_1)$  such that the solution $u$ to Cauchy problem \eqref{eq:OU_initial:intro} satisfies
\begin{equation}\label{de2}
\| D^2_{x_0} u \|_{L^p (\R^N_T)}\, \le \,   c_p  \|f\|_{L^p (\R^N_T)},
\end{equation}
where for any $(t,x)=(t,x_0,x_1)$ in $\R^N_T=(0,T)\times\R^{d_0}\times\R^{d_1}$, $D^2_{x_0}u(t,x)$ stands for the Hessian matrix in $\R^{d_0}\otimes\R^{d_0}$ with respect to the variable $x_0$ and $L^p(\R^N_T)$ is the standard $L^p$-space with respect to the Lebesgue measure. Under some additional regularity assumptions, namely the continuity in time for $S(t)$, we showed in \cite{Marino:Menozzi:Priola22} that the above estimates are indeed stable under a diffusive perturbation ${\rm Tr}(S(t)D^2_x)$. The main objective of the present paper is to extend the previous results in \cite{Marino:Menozzi:Priola22} to possibly fractional proxy operators and to diffusive perturbations where $S(t)$  {is only bounded measurable}. We will also show some applications of the previous arguments to elliptic or more general parabolic equations.

The article is organised as follows. We introduce in Section \ref{SEC_NOT} the notion of solution considered as well as some useful notations about the \textit{natural} function spaces associated with the specific degenerate operators described above. In that section, we also establish a maximum principle for a large class of operators, see Theorem \ref{thm:max_princ}, which we think has independent interest. Section \ref{ESTENSIONI} is then devoted to the statement of our main results, see Theorem \ref{thm:main_theorem}, and some related extensions (parabolic operators with potential and elliptic counterpart). Section \ref{SEC_PROOF} is eventually dedicated to the proof of the main result. As we mentioned before, the approach therein is based on the Poisson perturbative approach firstly considered in \cite{Krylov:Priola17}.

\setcounter{equation}{0}
\section{Useful notations}
\label{SEC_NOT}
\subsection{Definition of solution}

Let us consider in this section a slightly more general framework. We are interested in the following Cauchy problem:
\begin{equation}
\label{eq:Cauchy_problem_gen0}
\begin{cases}
\partial_t v(t,x)   =  L_{t}v(t,x)  + f(t,x) &\mbox{ on }\R^N_T; \\
v(0,x) \, = \, 0 &\mbox{ on }\R^N.
\end{cases}
\end{equation}
Above, the operator $L_{t}$ on $C^\infty_b(\R^N)$ is given by
\begin{equation}
\label{eq:def_full_operator}
\begin{split}
L_t\phi(x) \, &:= \, \text{Tr}\left(Q(t)D^2_x \phi(x)\right)+\langle  b(t,x), D_x\phi(x) \rangle-c(t)\phi(x)+\mathcal N_t \phi(x), \\
\mathcal N_t \phi(x) \,&:= \, \int_{\R^{d_0}} \left[\phi(x+ \gamma(t)z)-\phi(x)-\langle D_x\phi(x), \gamma(t)z\rangle\mathds{1}_{B(0,1)}(z)\right] \, \nu_\alpha(dz),
\end{split}
\end{equation}
where $Q\colon (0,T)\to \mathcal{S}_0(\R^N)$, $b\colon (0,T) \times \R^N \to \R^N$, $c\colon (0,T) \to [0,+\infty)$ and $\gamma\colon (0,T)\to \R^N\otimes \R^d$ are Borel measurable and $\nu_\alpha$ is the $\alpha$-stable L\'evy measure given in \eqref{eq:def_measure_nu_alpha}.

We will denote by $B_b\left(0,T;C^\infty_c(\R^N)\right)$ the space of all Borel bounded functions $\phi\colon \R^N_T \to \R$ such that $\phi(t,\cdot)$ is smooth and compactly supported for any $t$ in $(0,T)$, for any $n$ in $\N$ the $C^n(\R^N)$-norms of $\phi(t,\cdot)$ are bounded in time and the supports of the functions $\phi(t,\cdot)$ are contained in the same ball. We remark that the choice of such space, which turns out to be rather natural when considering sources that are possibly discontinuous in time, is strictly related to the proof technique used in \cite{Krylov:Priola17} and based on the Poisson process (cf.\ proof of Lemma \ref{lemma:finite_diff}).

For a bounded, Borel measurable function $f\colon \R^N_T\to \R$, we are going to interpret Cauchy Problem \eqref{eq:Cauchy_problem_gen0} in  an {\it integral in time} form:
\begin{equation}
\label{int2}
u(t,x) \, = \, \int_0^t \left[f(s,x)+ L_su(s,x) \right]\, ds.
\end{equation}
\begin{definition}[Integral solution for the Cauchy problem]\label{INT_SOL}
We say that a function $u\colon [0,T]\times \R^N \to \R$ is a solution to Equation \eqref{eq:Cauchy_problem_gen0} if $D^{\zeta}_x u$ is bounded continuous on $[0,T]\times \R^N$ for any $\zeta$ in $\N^N$ such that $|\zeta|\le 2$ and \eqref{int2} holds for any $(t,x)$ in $[0,T]\times\R^N$.
\end{definition}
We recall that for a given multi-index $\zeta$ in $\N^N$, $D^{\zeta}_x u$ denotes the iterated spatial derivatives of $u$ of order $\zeta$, i.e.
\[D^{\zeta}_x u \,:=\,D^{\zeta_1}_{x_1}D^{\zeta_2}_{x_2}\dots D^{\zeta_n}_{x_n}u.\]

Under some additional assumptions (see Theorem \ref{thm:max_princ} below), it is indeed possible to show that, when it exists,  an integral in time solution  $u$ to Cauchy problem \eqref{eq:Cauchy_problem_gen0} is unique and the following maximum principle holds:
\begin{equation}
\label{eq:Max_princ}
\sup_{(t,x)\in[0,T]\times \R^N}|u(t,x)| \, \le \, T\sup_{(t,x)\in\R^N_T}|f(t,x)|.
\end{equation}
While such estimates are well-known in the diffusive setting (see e.g.\ \cite[Theorem $4.1$]{Krylov:Priola10} or \cite{Marino:Menozzi:Priola22}), we could not find a precise proof in the fractional case. For this reason, we now present it for future reference.

\begin{theorem}[Maximum principle]
\label{thm:max_princ}
Let $\alpha\in (0,2)$. Assume in addition that $Q\colon (0,T)\to \mathcal{S}_0(\R^N)$ and $\gamma\colon(0,T)\to \R^N\otimes\R^d$ are bounded. Let $u$ be a solution to Cauchy problem \eqref{eq:Cauchy_problem_gen0} in the sense of Definition \ref{INT_SOL}, with $c=0$ and $b=0$. Then, the maximum principle \eqref{eq:Max_princ} holds for $u$.
\end{theorem}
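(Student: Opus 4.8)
The plan is to prove the maximum principle for the fractional operator
\eqref{eq:Cauchy_problem_gen0} with $b=0$, $c=0$ by a standard localization-plus-perturbation argument adapted to the non-local setting, exploiting the integral-in-time formulation \eqref{int2}. First I would fix $t_0 \in (0,T]$ and set $M := \sup_{\R^N_{t_0}}|u|$ over the strip $[0,t_0]\times\R^N$ (finite by Definition \ref{INT_SOL}); the goal is $M \le t_0 \sup_{\R^N_T}|f|$, which gives \eqref{eq:Max_princ} on letting $t_0 \to T$. Because $u$ need not decay at infinity, I cannot directly evaluate $L_t$ at a maximum point, so the key device is to introduce a Lyapunov-type corrector. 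For $\varepsilon>0$ consider $u_\varepsilon(t,x) := u(t,x) - \varepsilon(1+|x|^2)^{1/2} - \delta t$ (or a smooth bounded-derivative variant $\varphi_R$ that is $\asymp \langle x\rangle$ for $|x|\le R$ and flattens off beyond), chosen so that: (i) $u_\varepsilon \to -\infty$ as $|x|\to\infty$ uniformly in $t\in[0,t_0]$, hence $u_\varepsilon$ attains its maximum over $[0,t_0]\times\R^N$ at some interior-in-space point $(\bar t,\bar x)$; and (ii) the corrector is controlled by $L_t$, i.e. $\sup_{t}|L_t[\varepsilon\langle x\rangle](x)| \le C\varepsilon$ uniformly in $x$, using boundedness of $Q$, $\gamma$ and the integrability of $\nu_\alpha$ against the bounded $C^2$ function $\langle\cdot\rangle$ away from the origin together with its bounded Hessian near the origin.

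Next I would exploit the integral formulation at the maximizing time. If $\bar t = 0$ then $u_\varepsilon \le u_\varepsilon(0,\cdot) \le 0$ trivially and we are done for that branch; so assume $\bar t>0$. From \eqref{int2}, for small $h>0$,
\[
u(\bar t,\bar x) - u(\bar t - h,\bar x) \;=\; \int_{\bar t-h}^{\bar t}\bigl[f(s,\bar x) + L_s u(s,\bar x)\bigr]\,ds .
\]
Since $(\bar t,\bar x)$ maximizes $u_\varepsilon$ over the strip, $u(\bar t,\bar x) - u(\bar t-h,\bar x) \ge \delta h$ after accounting for the $-\delta t$ term (or, with the $\varphi_R$ variant, simply $\ge 0$), so dividing by $h$ and sending $h\downarrow 0$ along Lebesgue points of $s\mapsto f(s,\bar x)+L_s u(s,\bar x)$ (this map is bounded measurable, hence has full-measure Lebesgue points, and $\bar t$ can be taken among them up to an arbitrarily small perturbation) yields
\[
0 \;\le\; f(\bar t,\bar x) + L_{\bar t} u(\bar t,\bar x).
\]
Now I evaluate $L_{\bar t} u$ at the spatial max of $u_\varepsilon(\bar t,\cdot)$: since $D^2_x u(\bar t,\cdot) \preceq \varepsilon\, D^2_x\langle\cdot\rangle(\bar x)$ at a max and $Q(\bar t)\in\mathcal S_0$, the trace term is $\le C\varepsilon$; since $D_x u(\bar t,\bar x) = \varepsilon D_x\langle\cdot\rangle(\bar x)$, the drift-like compensator in $\mathcal N_{\bar t}$ contributes $O(\varepsilon)$; and since $\bar x$ is a global max of $u_\varepsilon(\bar t,\cdot)$, for every $z$ we have $u(\bar t,\bar x+\gamma(\bar t)z) - u(\bar t,\bar x) \le \varepsilon[\langle\bar x+\gamma(\bar t)z\rangle - \langle\bar x\rangle] \le \varepsilon|\gamma(\bar t)z|$, so the non-local integral is bounded by $C\varepsilon$ as well (using $\int_{|z|\le 1}|z|^2\nu_\alpha(dz) + \int_{|z|>1}\nu_\alpha(dz) < \infty$ and boundedness of $\gamma$). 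Hence $L_{\bar t}u(\bar t,\bar x) \le C\varepsilon$, giving $0 \le f(\bar t,\bar x) + C\varepsilon \le \sup_{\R^N_T}|f| + C\varepsilon$; but more to the point, combined with the maximum being attained, $M - \varepsilon\langle\bar x\rangle - \delta \bar t \le u_\varepsilon(\bar t,\bar x)$ and a parallel lower estimate give $M \le \bar t\,\sup|f| + C'\varepsilon$, and letting $\varepsilon,\delta \to 0$ and $t_0\to T$ closes the bound. The same argument applied to $-u$ (with $-f$) yields the two-sided estimate \eqref{eq:Max_princ}. Uniqueness follows by applying the bound to the difference of two solutions with $f\equiv 0$.

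The main obstacle I anticipate is the low time-regularity: $u$ is only assumed differentiable in $t$ in the integral sense \eqref{int2}, with $s\mapsto L_s u(s,x)$ merely bounded measurable, so one cannot pick the maximizing time $\bar t$ to be simultaneously a Lebesgue point of this map for the chosen $\bar x$ — this must be handled by an approximation/density argument (e.g. replacing $\bar t$ by a nearby Lebesgue point, using continuity of $(t,x)\mapsto u(t,x)$ and of its spatial derivatives to control the error, or by integrating the inequality $0 \le f + L u$ over a small time window and passing to the limit). A secondary technical point is making the non-local term rigorous at the max point when using the genuinely unbounded corrector $\langle x\rangle$: one should first work with the truncated smooth corrector $\varphi_R \in C^\infty_b$ to stay inside the class where $\mathcal N_{\bar t}$ is manifestly well-defined and the Hessian/gradient comparisons at a max are legitimate, and only afterwards send $R\to\infty$. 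Everything else — the trace estimate, the gradient-compensator estimate, and the sign of the non-local integral at a global spatial maximum — is routine given boundedness of $Q$ and $\gamma$ and the standard $\alpha$-stable Lévy measure bounds.
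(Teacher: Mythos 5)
Your overall strategy (Lyapunov corrector, spatial maximum, extract information from the integral-in-time formulation) shares the spirit of the paper's proof, but two concrete obstacles derail the argument as you have written it.

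First, the corrector $\langle x\rangle=(1+|x|^2)^{1/2}$ is not in the domain of $\mathcal N_t$ when $\alpha\le 1$: your bound $u(\bar t,\bar x+\gamma(\bar t)z)-u(\bar t,\bar x)\le\varepsilon|\gamma(\bar t)z|$ leads, over $|z|>1$, to $\varepsilon\int_{|z|>1}|z|\,\nu_\alpha(dz)=+\infty$, so neither the claim $\sup_t|L_t[\varepsilon\langle\cdot\rangle]|\le C\varepsilon$ nor the $C\varepsilon$ bound on the non-local term makes sense for $\alpha\in(0,1]$. The paper takes $\ell_\beta(x)=(1+|x|^2)^{\beta/2}$ with $\beta<\alpha\wedge 1$ precisely so that $\int_{|z|>1}|z|^\beta\,\nu_\alpha(dz)<\infty$. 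The bounded truncation $\varphi_R$ you suggest does not repair this: once $\varphi_R$ flattens, $u-\varepsilon\varphi_R$ no longer goes to $-\infty$ at spatial infinity and the maximizer may escape, and the subsequent limit $R\to\infty$ reintroduces the divergence.

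Second, the Lebesgue-point obstacle you flag is more serious than your proposed fixes acknowledge, because of the non-local term. Replacing $\bar t$ by a nearby time $s^*$ and invoking continuity of $u$, $D_xu$, $D^2_xu$ does not close the small-jump estimate: the compensated integrand equals $\int_0^1(1-\lambda)\langle D^2_xu(s^*,\bar x+\lambda\gamma(s^*)z)\gamma(s^*)z,\gamma(s^*)z\rangle\,d\lambda$, and the sign control at the maximum holds only at the single point $(\bar t,\bar x)$; continuity gives the smallness $\tau$ only in a $\theta$-ball, so one must cut the non-local integral at $|z|\lesssim\theta$ and not at $|z|\le 1$. The paper's actual argument makes this precise: it integrates the inequality over a window $[t_0,t]$, splits $\mathcal N_s v$ at $|z|\lessgtr t-t_0$, bounds the small-jump part by $c\tau$ via the local Hessian control, and bounds the large-jump part by combining the sign $v(t_0,x_0+\gamma z)-v(t_0,x_0)\le 0$ with the time-Lipschitz estimate on $v$ supplied by the PDE itself and the mass $\nu_\alpha(|z|>t-t_0)\sim(t-t_0)^{-\alpha}$, yielding a contribution that is $o(t-t_0)$ after integrating in $s$. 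This cutoff-matched-to-window-length mechanism and the use of the equation to get time regularity of $v$ are essential ingredients that your sketch lacks, and they are not ``routine.'' Finally, the paper first performs the substitution $\tilde u(t,x)=e^t\bigl(u(-t,x)+t\|f\|_\infty\bigr)$, which inserts a zero-order coefficient $\tilde c\equiv 1$ and a nonnegative source so that the sign arithmetic at the maximum closes; your sketch relies instead on a $-\delta t$ drift whose sign in the resulting inequality $u(\bar t,\bar x)-u(\bar t-h,\bar x)\ge -\delta h$ is in fact the wrong one for the contradiction you seem to be aiming for.
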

\begin{proof}
By considering $u$ and $-u$, it is clearly enough to show that
\[
\sup_{(t,x)\in\R^N_T}u(t,x) \, \le \, T \sup_{(t,x)\in\R^N_T}|f(t,x)|.
\]
In order to mimic the proof technique in \cite{Krylov:Priola10} (see Theorem 4.1 therein in the diffusive setting), we firstly notice that the function
$\tilde{u}\colon [-T,0]\times\R^N\to \R$ given by
\[\tilde u(t,x) \, := \, e^t\left(u(-t,x) + t \|f\|_{\infty}\right),\]
solves the following Cauchy problem:
\begin{equation}
\label{eq:Cauchy_problem_gen1}
\begin{cases}
\partial_t \tilde{u}(t,x)+\tilde{L}_t\tilde{u}(t,x) \, = \, \tilde{f}(t,x) &\mbox{ on }(-T,0)\times \R^N; \\
\tilde{u}(0,x) \, = \, 0 &\mbox{ on }\R^N.
\end{cases}
\end{equation}
where $\tilde{f}(t,x):=e^t(\|f\|_{\infty}-f(-t,x))\ge 0$ and $\tilde{L}_t$ is the operator given in \eqref{eq:def_full_operator} with coefficients $\tilde{Q}(t)=Q(-t)$, $\tilde{c}(t)=1$ and $\tilde{\gamma}(t)=\gamma(-t)$. For notational convenience, let us also denote $\tilde{\mathcal{N}}_t:=\mathcal{N}_{-t}$.
In order to conclude the proof, it is then enough to show that
\[\sup_{(t,x)\in [-T,0]\times\R^N}\tilde{u}(t,x)\, \le\, 0.\]

Let us introduce now the following barrier function $w(t,x)=\exp(-Ct)\ell_\beta(x)$ where $ \ell_\beta(x)=(1+|x|^2)^{\frac \beta 2}$ for some $\beta<\alpha\wedge 1$ and $C>0$.
One can observe that $\ell_\beta $ is smooth and that there exists $\bar C\ge 1 $ such that for any multi-index $\zeta$ in $\N^n$ such that $|\zeta|\le 2$,
\begin{align}\label{DER_BAR}
|D_x^\zeta \ell_\beta(x)|
\le \bar C(1+|x|^2)^{\frac \beta2-\frac{|\zeta|}2}.
\end{align}
In particular, by our choice of $\beta$, $D_x\ell_\beta$ is bounded. Moreover, it is possible to choose $C>0$ large enough so that
\begin{align}
\label{NEG_BARRIER}
(\partial_t+\tilde{L}_t)w(t,x) \,\le  \, 0.
\end{align}
Indeed, one can notice that $\partial_t w(t,x) = -Ce^{-Ct}\ell_\beta(x)$ and
\[
|\text{Tr}\left(\tilde{Q}(t)D^2_x w(t,x)\right)|\, \le \, e^{-Ct}\bar C.
\]
Furthermore, one can infer from the $\beta$-H\"older regularity of $\ell_\beta$ that
\[
|\tilde{\mathcal{N}}_tw(t,x)|\, = \, \left|e^{-Ct}{\rm p.v.}\int_{\R^d}[\ell_\beta(x+\gamma(t) z)-\ell_\beta(x)]\,  \nu_\alpha( dz)\right|\le \bar Ce^{-Ct}.
\]
Now, the point is to prove that for any fixed $\eta>0$ and any $(t,x)\in [-T,0]\times \R^N$,
\begin{equation}\label{EST_WITH_POTENTIAL_WITH_BARRIER}
v(t,x)\, := \, e^{-t/2}(\tilde{u}(t,x)-\eta w(t,x)) \, \le \, 0,
\end{equation}
where $w(t,x)$ is the previous barrier which satisfies \eqref{NEG_BARRIER}. The statement indeed follows for $\tilde{u}$ letting $\eta$ go to zero and using that $u$ is bounded in $\R^N_T$.

Since $\tilde{u}$ is bounded and $w$ goes to infinity with $|x| $, we deduce that the supremum of $v$ in $[-T,0]\times \R^N$ has to be attained at some point $(t_0,x_0) \in [-T,0]\times \R^N$. We can assume without loss of generality that $t_0$ is in $[-T,0)$. Moreover, since $v$ is smooth in space, $D_x v(t_0,x_0)=0$ and $D^2_xv(t_0,x_0) $ is non-positive definite. Then, by the space-time continuity of $v$ and its spatial derivatives, we notice that for any $\tau>0$, there exists  $\theta:=\theta(\tau) $ such that for any $(t,z)\in [-T,0)\times \R^N$ such that $|t-t_0|\vee|z-x_0|<\theta$,
\begin{equation}
\label{eq:proof_PM_ctny}
v(t,x_0)\ge 0, \qquad
|D_xv(t,z)| \le \tau,\qquad  [D^2_x v(t,z)]_{i,j} \le \tau\delta_{i,j}.
\end{equation}
We can now write for any $0\le t-t_0\le \theta$,
\[0 \, \ge \, v(t,x_0)- v(t_0,x_0)\, =\, \int_{t_0}^{t} \partial_s v(s,x_0) ds. \]
We then observe that for any $(s,x)$ in $[-T,0]\times \R^N$, we have that
\begin{align}
\label{eq:proof_max}
\partial_s v (s,x)\, &= \, -\frac 12 v(s,x) +e^{-s/2}(\partial_s \tilde{u}(s,x)-\eta\partial_sw(s,x))\\ \notag
&= \, -\frac 12 v(s,x)-\tilde{L}_s v(s,x) + e^{-s/2}\tilde{f}(s,x)-\eta e^{-s/2}\left(\partial_s+\tilde{L}_s\right)w(s,x) \\
&\ge\,-\frac 12 v(s,x)-\tilde{L}_s v(s,x),\notag
\end{align}
exploiting, in the last step, \eqref{NEG_BARRIER}. Hence, by the definition of the operator $\tilde{L}_s$ in \eqref{eq:def_full_operator} (with the choice of coefficients described after \eqref{eq:Cauchy_problem_gen1}), it holds that
\begin{equation}
\label{eq:proof_max0}
\int_{t_0}^tv(s,x_0) \,  ds \,\le \, 2\int_{t_0}^t \left[{\rm Tr}\left(\tilde{Q}(s) D^2_xv(s,x_0)\right) +\tilde{\mathcal{N}_s} v(s,x_0)\right]\, ds.
\end{equation}
Taking $t-t_0<\theta$ and $s$ in $[t,t_0]$, one can easily conclude from \eqref{eq:proof_PM_ctny} that
\begin{equation}
\label{eq:proof_max1}
{\rm Tr}\left(\tilde{Q}(s)D^2_xv(s,x_0)\right) \,  \le  \, c\tau.
\end{equation}
On the other hand, we can split $\tilde{\mathcal{N}_s} v$ into $I^1_sv+I^2_sv$, where
\begin{align*}
I^1_sv(s,x_0)\, &:= \, \int_{|z|\le t-t_0}\left[ v(s,x_0+\tilde{\gamma}(s)z)-v(s,x_0)-\langle \nabla  v(
s,x_0),\gamma(s)z\rangle \right] \nu_\alpha(dz),\\
I^2_sv(s,x_0)\, &:= \, \int_{|z|> t-t_0}\left[ v(s,x_0+z)-v(s,x_0) \right] \nu_\alpha(dz).
\end{align*}
Using again \eqref{eq:proof_PM_ctny}, one can then derive that
\begin{equation}
\begin{split}
\label{eq:proof_max2}
I^1_sv(s,x_0) \, &= \, \int_0^1\int_{|z|\le t-t_0} \langle D^2_x v(s,x_0+\lambda \gamma(s)z)\gamma(s)z,\gamma(s)z\rangle \, \nu_\alpha(dz)d\lambda \\
&\le \, c\tau \int_{|z|\le 1} |z|^2 \, \nu_\alpha(dz) \, \le  \,c\tau.
\end{split}
\end{equation}
On the other hand, since $v(t_0,x_0)$ is a maximum, so that $v(t_0,x_0+z)- v(t_0,x_0)\le 0$, we also have that
\[\begin{split}
I^2_s v&(s,x_0) \\
&= \, \int_{|z|> t-t_0} \left[v(s,x_0+\gamma(s)z)\pm v(t_0,x_0+\gamma(s)z)\pm v(t_0,x_0)-v(s,x_0)\right] \, \nu_\alpha(dz)\\
&\le \, \int_{|z|>t-t_0} [v(s,x_0+\gamma(s)z)-v(t_0,x_0+\gamma(s)z)+v(t_0,x_0)-v(s,x_0)] \, \nu_\alpha(dz)\\
&= \, \int_{t_0}^{s}\int_{|z|> t-t_0} \left[\partial_rv(r,x_0+\gamma(s)z)-\partial_r v(r,x_0)\right] \, \nu_\alpha(dz)dr.
\end{split}\]
Similarly to \eqref{eq:proof_max}, we note that
\[\partial_s v (s,x)\, = \, e^{-s/2}\left[\eta \tilde{C}w(s,x)+\tilde{f}(s,x)-\tilde{L}_s\tilde{u}(s,x))-\frac 12 \tilde{u}(s,x)\right],\]
where the constant $\tilde{C} := C+1/2$ and $C$ defined in \eqref{NEG_BARRIER}. Hence,
\begin{multline*}
I^2_s v(s,x_0) \, \le \, \int_{t_0}^{s}\int_{|z|>t-t_0}
\Bigl|\left(\eta \tilde{C}w+\tilde{f}-\tilde{L}_r\tilde{u}-\frac 12 \tilde{u}\right)(r,x_0+\gamma(s)z)\\
-\left(\eta \tilde{C}w+\tilde{f}-\tilde{L}_r\tilde{u}-\frac 12 \tilde{u}\right)(r,x_0)\Bigr|\, \nu_\alpha(dz)dr
\end{multline*}
When $\alpha$ is in $(0,1)$, we can use the boundedness of $\tilde{u}$ and $\tilde{f}$ and the $\beta$-H\"older continuity of $\ell_\beta$ to infer that
\begin{align}
\label{eq:proof_max3}
I^2_s v(s,x_0) \, &\le \, c(s-t_0)\int_{|z|\ge t-t_0}\left(1+|z|^\beta\right) \nu_\alpha(dz) \, \le \, c(s-t_0)\int_{t-t_0}^{+\infty} r^{-(1+\alpha)}(1+r^\beta) \, dr \notag\\
&\le \, c(s-t_0)(t-t_0)^{-\alpha}.
\end{align}
If instead $\alpha\in (1,2)$, we need to apply an additional Taylor expansion:
\begin{multline*}
 I^2_s v(s,x_0) \, \le \, \int_0^1\int_{t_0}^{s}\int_{|z|\ge t-t_0}\Bigl| \left\langle \left(\eta \tilde{C}D_xw+D_x\tilde{f}\right)(r,x_0+\lambda \gamma(s)z),\gamma(s)z\right\rangle \\
 -\left\langle \left(D_x\left(\tilde{L}_r\tilde{u}\right)+\frac 12 D_x\tilde{u}\right)(r,x_0+\lambda \gamma(s)z),\gamma(s)z\right\rangle\Bigr| \,  \nu_\alpha(dz)drd\lambda.
\end{multline*}
Using again the boundedness of the functions involved, i.e. $D_x\tilde{u}$, $D_xw$ and $D_x\tilde f $, and noting that $D_x\left( \tilde{L}_r \tilde{u}\right)= \tilde{L}_r\left(D_x \tilde{u}\right)$, we can conclude in the same way as above that
\begin{equation}
\label{eq:proof_max4}
I^2_s v(s,x_0) \, \le  \, c(s-t_0)(t-t_0)^{1-\alpha}.
\end{equation}
Applying estimates \eqref{eq:proof_max1}, \eqref{eq:proof_max2} and \eqref{eq:proof_max3} or \eqref{eq:proof_max4} inside \eqref{eq:proof_max0}, we finally get that
\[
\int_{t_0}^tv(s,x_0) \,  ds \,\le \,  c\left[\tau +(t-t_0)^\varepsilon\right] (t-t_0),
\]
for some  $\varepsilon>0$. Using the continuity of $v$, we can divide both sides by $t-t_0$ and let $t$ go to $t_0$ so that
\[v(t_0,x_0) \, \le\, c\tau.\]
Since $\tau>0$ is arbitrary, we have concluded the proof.
\end{proof}

\subsection{Definition of the anisotropic norms}

We firstly recall from \cite{Lanconelli:Polidoro94} that the state space $\R^N$ can be split into $\R^{d_0}\times\R^{d_1}$ so that, under such decomposition, the matrix $B$ assumes the form in \eqref{eq:def_B_0}.
More precisely, in the aforementioned work, it was established that
Assumption \textbf{[K]} is equivalent to the fact that there exists $k\in \N $ and positive integers $\{\mathfrak d_i\colon i\in\{1,\cdots, k\}\}$ such that $\sum_{i=1}^k \mathfrak d_i=d_1 $ and for all $i\in \{1,\cdots,k\} $, setting $\mathfrak d_0=d_0 $ and $\sum_{m=0}^{-1} =0$, the matrices
$${\mathscr A}^i\, := \, (A_{j,\ell})_{(j,\ell)\in \{\sum_{m=0}^{i-1}\mathfrak d_m+1,\cdots,\sum_{m=0}^{i}\mathfrak d_m\}\times \{\sum_{m=1}^{i-1}\mathfrak d_m +1,\cdots,\sum_{m=1}^{i}\mathfrak d_m\}},$$
have rank $\mathfrak d_i $.  Moreover, the matrix $A$ writes:
\begin{equation} \label{sotto}
A \, = \,
    \begin{pmatrix}
        \ast   & \ast  & \dots  & \dots  & \ast   \\
         {\mathscr A}^1  & \ast  & \ddots & \ddots  & \vdots   \\
        0_{\mathfrak d_2,d_0}      & {\mathscr A}^2   & \ast  & \ddots & \vdots \\
        \vdots &\ddots & \ddots& \ddots & \ast \\
        0_{\mathfrak d_k,d_0}      & \dots & 0_{\mathfrak d_k,\mathfrak d_{k-1}}     & {\mathscr A}^{k}    & \ast
    \end{pmatrix}.
\end{equation}
We can then write $x\in \R^N $ as $x=(x_0,x_1,\cdots, x_k) $ with $x_i\in \R^{\mathfrak{d}_i},\ i\in \{0,\cdots,k \}$.
In order to properly introduce the anisotropic functional space we are going to consider, we follow \cite{Huang:Menozzi:Priola19} by introducing the orthogonal projection $p_i\colon \R^N\to \R^{\mathfrak{d}_i}$ such that $p_i(x)=x_i$ and denoting its adjoint by $E_i\colon \R^{\mathfrak d_i}\to \R^N$, for any $i$ in $\llbracket 0, k \rrbracket$. For notational simplicity, let us denote
\begin{equation}\label{INDEXES}
\alpha_{i} \,:=\, \frac{\alpha}{2}\frac{1}{1+\alpha i}.
\end{equation}
The threshold in \eqref{INDEXES} might seem awkward at first sight. While the first term $\alpha/2$ relates to the maximal regularity associated with the fractional Laplacian $\Delta^\alpha$, the second one actually corresponds to the index needed to get
the invariance by dilations for the harmonic functions associated with the principal part of the Ornstein-Uhlenbeck operator $\mathcal{L}^{\text{ou}}_{\alpha}$. Namely, one considers the operator $\mathcal{L}_{0,\alpha}^\text{ou}$ given in \eqref{DEF_OU_OP_PROXY} with respect to the matrix
\begin{equation}
\label{eq:def_A_0}
A_0 \, = \,
    \begin{pmatrix}
        0_{\mathfrak d_0, \mathfrak d_0}   & 0_{\mathfrak d_0, \mathfrak d_1}  & \dots  & \dots  & 0_{\mathfrak d_0, \mathfrak d_k}   \\
         {\mathscr A}^1  & 0_{\mathfrak d_1,\mathfrak d_1}  & \ddots & \ddots  & \vdots   \\
        0_{\mathfrak d_2,\mathfrak d_0}      & {\mathscr A}^2   & 0_{\mathfrak d_2,\mathfrak d_2}  & \ddots & \vdots \\
        \vdots &\ddots & \ddots& \ddots & \vdots \\
        0_{\mathfrak d_k,\mathfrak d_0}      & \dots & 0_{\mathfrak d_k,\mathfrak d_{k-1}}     & {\mathscr A}^{k}    & 0_{\mathfrak d_k,\mathfrak d_k}
    \end{pmatrix}.
\end{equation}
Note that $A_0,B$ again satisfy \textbf{[K]}. If $ (\partial_t -\mathcal{L}_{0,\alpha}^\text{ou})u(t,x)=0$ then for all $ \lambda>0$ $(\partial_t -\mathcal{L}_{0,\alpha}^\text{ou})u\big(\delta_\lambda(t,x) \big)=0$ where the dilation operator
\[\delta_\lambda(t,x)=(\lambda^{1/\alpha}t , \lambda x_0, \lambda^{1/(1+\alpha)} x_1,\cdots, \lambda^{1/(1+\alpha k)} x_k),\]
precisely exhibits the exponents in \eqref{INDEXES} for the spacial components.

Since the main focus of the present work is on Schauder and Sobolev type estimates for solutions to Cauchy problem \eqref{eq:OU_initial:intro},
we now briefly recall the definition of Sobolev and H\"older norms in our anisotropic context. Fixed $\beta$ in $(0,1)$, let us denote by $\Delta^\beta_{x_i}$ the $\beta$-fractional Laplacian  along the $i$-th direction, i.e.
\[\Delta^{\beta}_{x_i}\phi(x) \, := \, \text{p.v.}\int_{\R^{\mathfrak d_i}}\left[\phi(x+E_i z)-\phi(x)\right] \frac{dz}{|z|^{\mathfrak d_i+ 2\beta}}, \quad x \in \R^N,\]
for any smooth enough function $\phi \colon \R^N\to \R$.
Given $p$ in $[1,+\infty)$, we can now define the \textit{homogeneous} Sobolev space $\dot{W}^{2,p}_d(\R^N_T)$ as the family of all the functions $\phi\colon \R^N_T\to \R$ in $L^p(\R^N_T)$ such that for any $i$ in $\llbracket 0, k \rrbracket$, $\Delta^{\alpha_i}_{x_i}\phi(t,x)$ is well defined for almost every $(t,x)$ in $\R^N_T$ and
\[\Delta^{\alpha_i}_{x_i}\phi(t,x) \, := \,  \Delta^{\alpha_i}_{x_i}\phi(t,\cdot)(x)   \text{ belongs to } L^p(\R^N_T).\]
It is endowed with the natural \textit{semi}-norm $[ \cdot ]_{\dot W_d^{\alpha,p}(\R^N_T)}$ given by:
 \begin{equation}\label{SEMI_NORM_SOB}
 [\phi ]_{\dot W_d^{\alpha,p}(\R^N_T)}^p \, = \, \sum_{i=0}^k\Vert \Delta^{\alpha_i}_{x_i}\phi \Vert_{L^p(\R^N_T)}^p.
\end{equation}

Following Krylov \cite{book:Krylov96}, for  some fixed $\ell$ in $\N_0:=\N\cup\{0\}$ and $\beta$ in $(0,1]$, we introduce for a function $\phi\colon \R^N\to \R$ the Zygmund-H\"older semi-norm  as
\[[\phi]_{C^{\ell+\beta}} \, := \,
\begin{cases}
\sup_{\vert \vartheta \vert= \ell}\sup_{x\neq y}\frac{\vert D^\vartheta\phi(x)-D^\vartheta\phi(y)\vert}{\vert x-y\vert^\beta} , & \mbox{if }\beta \neq 1; \\
\sup_{\vert \vartheta \vert= \ell}\sup_{x\neq y}\frac{\bigl{\vert}D^\vartheta\phi(x)+D^\vartheta\phi(y)-2D^\vartheta\phi(\frac{x+y}{2}) \bigr{\vert}}{\vert x-y \vert}, & \mbox{if } \beta =1,
\end{cases}\]
(we are using usual multi-indices $\vartheta$ for the partial derivatives).
Consequently, the Zygmund-H\"older space $C^{\ell+\beta}_b(\R^N)$ is the family of bounded functions $\phi\colon \R^N
\to\R$ such that $\phi$ and its derivatives up to order $\ell$ are continuous and the norm
\[\Vert \phi \Vert_{C^{\ell+\beta}_b} \,:=\,  \sum_{i=0}^{\ell}\sup_{\vert\vartheta\vert = i}\Vert D^\vartheta\phi
\Vert_{\infty}+[\phi]_{C^{\ell+\beta}} \,\text{ is finite.}
\]
We can now define  the anisotropic Zygmund-H\"older spaces associated with the current setting and which again reflect the various scales already introduced in \eqref{INDEXES}. Let $\gamma\in (0,3)$, the space $C^{\gamma}_{b,d}(\R^N)$ is
the family of functions $\phi\colon \R^N\to \R$ such that for any $i$ in $\llbracket 0,k\rrbracket$ and any $x_0$ in $\R^N$, the real  function
\[y\in  \R^{\mathfrak d_i}\,  \to \, \phi(x_0+E_i(y))  \,\text{ belongs to }C^{\gamma/(1+\alpha i)}_b\left(\R^{\mathfrak d_i}\right),\]
with a norm bounded by a constant independent from $x_0$.

We will also consider  the corresponding natural semi-norm for the related homogeneous space
\begin{equation}\label{eq:def_anistotropic_norm}
[\phi]_{C^{\gamma}_d} \,:=\,\sum_{i=0}^{k}\sup_{x_0\in \R^N} \left[\phi\left(x_0+ E_i(\cdot)\right)\right]_{C^{\gamma/(1+\alpha i)}(\R^{\mathfrak d_i})}.
\end{equation}

We finally remark that we have denoted by $C^{\gamma}_{d}$ and $\dot{W}^{2,p}_{d}$ the anisotropic functional spaces because the regularity exponents reflect again the multi-scale features of the system. In particular, the H\"older norm could equivalently be defined through the corresponding spatial parabolic distance $d$ defined as follows: For any $x,x'\in \R^N$:
\[d(x,x')\, := \, \sum_{i=0}^{k}\vert x_i-x_i'\vert^{\frac{1}{1+\alpha i}},\]
where the exponents are again those who appeared in \eqref{INDEXES}.

\setcounter{equation}{0}
\section{Main results}
\label{ESTENSIONI}
Let  $f$ be in $B_b\left(0,T;C^\infty_c(\R^N)\right)$ and $u$ the unique solution  to the corresponding Cauchy problem \eqref{eq:OU_initial:intro}.

In \cite{Huang:Menozzi:Priola19}, see also \cite{Chen:Zhang19}  and \cite{Menozzi18} where time inhomogeneous coefficients are considered as well, it has been proven that if $A,B $ satisfy \textbf{[K]} and the diagonal and the strictly upper diagonal elements of $A$ in \eqref{sotto} are equal to zero (i.e., $A = A_0$ in \eqref{eq:def_A_0}) then the following Sobolev estimates hold (with $p\in (1,+\infty)$):
\begin{equation}\label{eq:Sobolev_estim}
[ u ]_{\dot W^{\alpha,p}_d(\R^N_T)} \, \le \, C_p\Vert f \Vert_{L^p(\R^N_T)},
\end{equation}
for some positive constant $C_p := C(p,\alpha,A,B, d_0, d_1)$. We remark that the specific structure assumed on $A$ is actually due to the fact that for such matrices there is an underlying homogeneous space structure which makes easier to establish maximal regularity estimates (see e.g. \cite{book:Coifman:Weiss} in this general setting). If  $A,B $ satisfy \textbf{[K]} with  a general $A$ as in \eqref{sotto}, having non-zero strictly upper diagonal entries, we believe that  the approach in \cite{Bramanti:Cupini:Lanconelli:Priola10} could extend to show that  \eqref{eq:Sobolev_estim} still holds in this general setting.
In particular, non-zero entries in the diagonal do  not create additional difficulties but they introduce a dependence in the final time $T$ in the previous estimates.
However, the estimates in this more general framework have not been, up to our best knowledge, proven yet.

Let us now fix $\beta$ in $(0,1)$ such that $\beta<\alpha$. Under Kalman condition \textbf{[K]}, Lemmas $9$ and $10$ in \cite{Marino20} (but see also \cite{Chaudru:Honore:Menozzi18_Sharp, Lunardi97})  show that the solution $u$ also verifies the following anisotropic Schauder estimates:
\begin{equation}\label{eq:Schauder_estim}
\sup_{t\in [0,T]}[ u(t,\cdot) ]_{C^{\alpha+\beta}_d} \, \le \,  C_\beta \sup_{t\in(0,T)} [f(t,\cdot)]_{C^{\beta}_d},
\end{equation}
for some positive constant $C_\beta:=C(\beta,\alpha,A,B,d_0,d_1,T)$. Moreover, it is possible to show (see again \cite{Marino20} in a fractional setting) that, when considering a dilation invariant matrix $A$, i.e.\ when $A=A_0$ in \eqref{eq:def_A_0}, the constants $c_p,C_\beta$ appearing in the estimates \eqref{de2}-\eqref{eq:Schauder_estim} are indeed independent from the final time $T$ (see also \cite{Chaudru:Menozzi:Priola20}).

Our main result shows that indeed the above estimates are invariant under time dependent diffusive perturbations.

\begin{theorem}
    \label{thm:main_theorem}
Under [\textbf{A}], let $f$ be in $B_b(0,T;C^\infty_c(\R^N))$. Then, there exists a unique solution $u$ to the perturbed Cauchy problem
\begin{equation}
\label{eq:OU_initial:intro_pert}
\begin{cases}
    \partial_t u(t,x) \, = \,  \mathcal{L}^{\text{pert}}_{\alpha,t} u(t,x) +f(t,x)\quad &\mbox{ on }\R^N_T;\\
   u(0,x)\, = \, 0 \quad &\mbox{ on } \R^N,
\end{cases}
\end{equation}
where $\mathcal{L}^{\text{pert}}_{\alpha,t} $ is defined in \eqref{eq:def_pert_operator} for a {bounded measurable} perturbation $S(t)$. Moreover, for any $p>1$ and any $\beta$ in $(0,\alpha \wedge 1)$, it holds that
\begin{equation}
\label{eq:Estimates}
\begin{split}
    \sup_{t\in [0,T]} [u(t,\cdot) ]_{C^{\alpha+\beta}_d} \,&\le\,C_\beta \sup_{t\in (0,T)} [f(t,\cdot)]_{C^\beta_d};\\
    \text{ if $A=A_0$ in \eqref{eq:def_A_0}}, \qquad \quad [u]_{\dot W^{\alpha,p}_d (\R^N_T)} \, &\le \, C_p\, \Vert f\Vert_{L^p(\R^N_T)},
\end{split}
\end{equation}
where $C_\beta,C_p$ are the same constants as in \eqref{eq:Schauder_estim} and \eqref{eq:Sobolev_estim} respectively.
\end{theorem}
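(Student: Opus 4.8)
## Proof proposal for Theorem \ref{thm:main_theorem}

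The plan is to follow the Poisson perturbative strategy of \cite{Krylov:Priola17}, as already adapted to the degenerate Gaussian case in \cite{Marino:Menozzi:Priola22}, but carried out so that only boundedness and measurability of $S(t)$ is used and so that the fractional case $\alpha\in(0,2)$ is covered. First I would reduce to the case where the perturbation is \emph{piecewise constant} in time: approximate the bounded measurable $S\colon(0,T)\to\mathcal S_0(\R^N)$ by piecewise constant $\mathcal S_0(\R^N)$-valued maps $S_n$ with $S_n\to S$ in, say, $L^1(0,T)$ (and uniformly bounded). For each $n$, decompose the perturbation on a mesh of small intervals; on each interval the extra second order term ${\rm Tr}(S_n D_x^2)$ is a constant-coefficient diffusion, which is exactly the situation that can be generated by adding, along a suitable direction, independent jumps of a compound Poisson process at the mesh times. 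Averaging over the Poisson randomness turns the discontinuous random drift into a \emph{finite difference} operator in the associated (random-parameter) PDE; this is the content of the auxiliary lemma referred to in the excerpt as Lemma \ref{lemma:finite_diff} (proof based on the Poisson process). The key point, as in \cite{Krylov:Priola17}, is that this randomized equation still satisfies the \emph{same} Schauder / Sobolev estimate \eqref{eq:Schauder_estim}–\eqref{eq:Sobolev_estim} as the unperturbed $\mathcal L_\alpha^{\mathrm{ou}}$, because the finite difference operator is a bounded (averaging-type) operator that commutes with the relevant anisotropic seminorms and can be absorbed — here one crucially uses that $f$ lies in the right class $B_b(0,T;C_c^\infty(\R^N))$, which is stable under the finite difference operation and under the evolution, and one uses the maximum principle Theorem \ref{thm:max_princ} to get well-posedness and an a priori sup bound for the randomized Cauchy problems (the fractional maximum principle is precisely why that theorem was proved above).

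Concretely, the order of steps I would carry out is: (1) existence and uniqueness of the solution $u$ to \eqref{eq:OU_initial:intro_pert} in the sense of Definition \ref{INT_SOL}: uniqueness from Theorem \ref{thm:max_princ} (applied to the difference of two solutions, with $c=0,b=0$, $Q(t)=B+S(t)$ when $\alpha=2$ and $Q(t)=S(t)$, $\gamma(t)=\sigma$ when $\alpha<2$, all bounded), existence by a fixed-point / parametrix argument for the fixed perturbed operator — or, more in the spirit of the paper, as the limit of the randomized approximants below. (2) For piecewise-constant $S_n$, set up the randomized equation with a compound Poisson process whose jump times are the mesh points and whose jump law is tuned so that the expectation of the random evolution equals the $S_n$-perturbed evolution (this is where ${\rm Tr}(S_n(t)D_x^2)$ appears after averaging, via a second-order Taylor expansion of the finite difference quotient as the mesh is refined). (3) Apply the known unperturbed estimates \eqref{eq:Schauder_estim} and \eqref{eq:Sobolev_estim} to the randomized equation — uniformly in the Poisson parameter — and average, using Jensen / Minkowski for the $L^p$ case and sub-additivity of the $C^{\alpha+\beta}_d$ seminorm for the Schauder case, to obtain the desired estimate for the $S_n$-perturbed solution $u_n$ \emph{with the same constant}. (4) Pass to the limit $n\to\infty$: from the uniform estimates and the maximum principle one extracts (a subsequence of) $u_n$ converging, together with the relevant spatial derivatives, to a solution $u$ of \eqref{eq:OU_initial:intro_pert}, and the estimates are inherited by lower semicontinuity of the seminorms under this convergence; uniqueness from step (1) identifies the limit.

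The main obstacle I expect is step (2)–(4) in the \emph{measurable} (as opposed to continuous) regime: one must show that refining the time mesh and simultaneously sending the Poisson intensity appropriately produces, in the limit, exactly the operator ${\rm Tr}(S(t)D_x^2)$ for merely bounded measurable $S$, and that the error terms in the Taylor expansion of the finite-difference operator are controlled \emph{uniformly in time} using only $\|S\|_{L^\infty}$ — there is no modulus of continuity to lean on, so the estimates must be built from the anisotropic scaling structure and the a priori bounds on $D^{\zeta}_x u_n$, $|\zeta|\le 2$, alone. A secondary technical difficulty, specific to the non-local setting $\alpha\in(0,2)$, is that the perturbation ${\rm Tr}(S(t)D_x^2)$ is of \emph{second} order while the proxy $\mathcal L_\alpha^{\mathrm{ou}}$ is only of order $\alpha<2$ in its diffusive part; one has to check that the randomized-PDE argument still closes — i.e. that the finite difference operator coming from the Poisson jumps, scaled to produce a genuine Laplacian-type term, is still dominated in the $C^{\beta}_d$ / $L^p$ norms after division by the mesh size, which is where the gap between the degenerate directions' scaling exponents $1/(1+\alpha i)$ and the plain second-order scaling of $S$ must be reconciled (cf.\ the discussion of \eqref{INDEXES}). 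Once these uniform-in-time bounds are in place, the compactness and limiting arguments are routine.
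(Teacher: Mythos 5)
Your plan captures the correct core ideas (Poisson randomisation of the source, averaging to produce a finite-difference perturbation, inherit the unperturbed estimates via translation invariance and Jensen, compactness to pass to the limit, maximum principle for uniqueness and the sup bound), but it takes a detour that the paper deliberately avoids and omits one structural reduction. The paper first performs the change of variables $v(t,x):=u(t,e^{-tA}x)$, which removes the drift $\langle Ax,D_x\cdot\rangle$ from the operator and transports the known estimates \eqref{eq:Sobolev_estim}--\eqref{eq:Schauder_estim} into the form \eqref{eq:Estimates_cv}; you never mention this step, and without it the Poisson-shift argument does not match the structure of $\mathcal{L}^{\mathrm{pert}}_{\alpha,t}$. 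More importantly, the paper does \emph{not} pre-approximate $S(t)$ by piecewise constant matrices on a time mesh, nor does it tie the Poisson jumps to mesh points: the jumps occur at the random epochs of a Poisson process of intensity $\lambda=\frac12\varepsilon^{-2}$, and Lemma \ref{lemma:finite_diff} is proved directly for any \emph{bounded measurable} shift $l\colon(0,T)\to\R^N$, since the randomised source $\bar f(t,x)=\tilde f(t,x-X_t)$ remains in $B_b(0,T;C^\infty_c(\R^N))$ pathwise and the unperturbed estimates are translation invariant. The only genuine obstruction that mere measurability creates is selecting a measurable square root $\sqrt{\tilde S(t)}$ to define the shifts $l_i(t)=\varepsilon\sqrt{\tilde S(t)}e_i$ in the finite-difference operator $J^{\tilde S}_{t,\varepsilon}$ of \eqref{eq:approx_fin_diff}; this is the content of Lemma \ref{lemma:measur_square_root}, and it replaces your piecewise-constant approximation entirely, reducing the argument to a single limit $\varepsilon\to 0$ rather than a double limit in mesh/approximation level and intensity.

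Your ``secondary technical difficulty'' (reconciling the second-order perturbation with the $\alpha$-order proxy in the $C^\beta_d$/$L^p$ norms after rescaling) does not in fact arise in the paper's route: for each fixed $\varepsilon$ the operator $J^{\tilde S}_{t,\varepsilon}$ is bounded and is never absorbed into the proxy or controlled against it; the a priori estimates for $w_\varepsilon$ are inherited verbatim from the Poisson-shifted unperturbed problem, and the passage $\varepsilon\to 0$ is a pure compactness argument — the maximum principle gives bounds on all spatial derivatives of $w_\varepsilon$ (uniformly in $\varepsilon$) by applying it to finite differences of $w_\varepsilon$, whence equi-Lipschitz continuity in time in the integral form of \eqref{eq:fin_diff_perturb}, then Arzel\`a--Ascoli and dominated convergence identify the limit $w$ as the solution of \eqref{eq:PDE_pass_limit} and the estimates carry over by lower semicontinuity. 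So the scaling gap between the second-order term and the anisotropic exponents in \eqref{INDEXES} never needs to be reconciled at the operator level.
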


In the diffusive setting (i.e.\ when $\alpha=2$) the above result can be exploited to show that the estimates do not depend on the intensity of the matrix $B$ but only on the ellipticity constant $\kappa_2$  in \eqref{eq:def_kappa_2}. Indeed, under our hypothesis, it is possible to rewrite $B=\kappa_2 B_I+\tilde{B}$ where
\[B_I \, := \, \begin{pmatrix}
 I_{d_0}  & 0_{d_0,d_1}\\
 0_{d_1,d_0} & 0_{d_1,d_1}
\end{pmatrix}\]
and $\tilde{B}:=B-\kappa_2 B_I$ belongs to $\mathcal{S}_0(\R^N)$. Then, Theorem \ref{thm:main_theorem} implies that the constants appearing in \eqref{eq:Estimates} do not depend on $\|B\|$ but only linearly on $\kappa_2^{-1}$.

Once we will obtain the above result, it is easy to generalize it to more general operators. Indeed, let us consider the operator
\begin{equation}
\label{eq:def_operator_Lt}
\begin{split}
\mathscr{L}_{\alpha,t}\phi(x) \, := \, \mathcal{L}^{\text{pert}}_{\alpha,t}\phi(x) +\langle a(t),D_x\phi(x)\rangle -c(t)\phi(x),
\end{split}
\end{equation}
where $c\colon (0,T)\to [0,\infty)$, $a\colon (0,T)\to \R^N$ are two integrable functions and, we recall, $\mathcal{L}^{\text{pert}}_{\alpha,t} $ is defined in \eqref{eq:def_pert_operator} for a {bounded measurable} perturbation $S(t)$. For any sufficiently regular function $\phi\colon [0,T]\times\R^N\to \R$, we are going to denote
\begin{equation}
\label{eq:relation_solutions_time}
\mathcal{T}\phi(t,x) \, := \, e^{-\int_{0}^{t}c(s) \,ds}\phi\left(t,x+\int_{0}^{t}a(s) \,ds\right).
\end{equation}
It is not difficult to check that the ``operator'' $\mathcal{T}$ transforms solutions of the Cauchy Problem \eqref{eq:OU_initial:intro_pert} to solutions of the Cauchy Problem driven by $\mathscr{L}_{\alpha,t}$, even if for a modified source $\mathcal{T}f$.

\begin{corollary}
\label{prop:Schauder_estimate_time}
Under [\textbf{A}], let $f$ be in $B_b(0,T;C^\infty_c(\R^N))$. Then, there exists a unique solution $v$ to the Cauchy Problem
\begin{equation}
\label{eq:OU_initial:intro_gen}
\begin{cases}
    \partial_t v(t,x) \, = \,  \mathscr{L}_{\alpha,t} v(t,x) +f(t,x)\quad &\mbox{ on }\R^N_T;\\
   v(0,x)\, = \, 0 \quad &\mbox{ on } \R^N.
\end{cases}
\end{equation}
 Moreover, for any $p>1$ and any $\beta$ in $(0,1\wedge \alpha)$, it holds that
\begin{equation}\label{eq:Estimate_full_operator}
\begin{split}
    \sup_{t\in [0,T]} [v(t,\cdot)]_{C^{\alpha+\beta}_{d}(\R^N)} \,&\le\,C_\beta \sup_{t\in (0,T)} [f(t,\cdot)]_{C^\beta_d(\R^N)};\\
    \text{ if $A=A_0$ in \eqref{eq:def_A_0}}, \qquad [v]_{\dot W^{\alpha,p}_d (\R^N_T)} \, &\le \, C_p\, e^{\int_0^Tc(s)\, ds} \Vert f\Vert_{L^p(\R^N_T)},
\end{split}
\end{equation}
where $C_\beta$, $C_p$ are the same constants appearing in Theorem \ref{thm:main_theorem}.
\end{corollary}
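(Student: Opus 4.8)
The strategy is to reduce the statement to Theorem~\ref{thm:main_theorem} by means of the explicit change of unknown $\mathcal{T}$ introduced in \eqref{eq:relation_solutions_time}. Write $C(t):=\int_0^tc(s)\,ds$ and $\mu(t):=\int_0^ta(s)\,ds$; the integrability of $c\ge 0$ and of $a$ makes both absolutely continuous and bounded on $[0,T]$, with $1\le e^{C(t)}\le e^{\int_0^Tc(s)\,ds}$. The first step is to check that the inverse map $\mathcal{T}^{-1}f(t,x)=e^{C(t)}f(t,x-\mu(t))$ preserves the class $B_b(0,T;C^\infty_c(\R^N))$: for each $t$ the section $\mathcal{T}^{-1}f(t,\cdot)$ is smooth, its support is that of $f(t,\cdot)$ translated by the bounded vector $\mu(t)$ and hence remains inside one fixed ball, its $C^n(\R^N)$-norms equal $e^{C(t)}$ times those of $f(t,\cdot)$ and so stay bounded in $t$, and measurability in $(t,x)$ is clear. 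This is exactly where the integrability hypotheses on $a$ and $c$ are used.

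Granting this, I would let $u\in B_b(0,T;C^\infty_c(\R^N))$ be the unique solution of the perturbed Cauchy problem \eqref{eq:OU_initial:intro_pert} with source $\mathcal{T}^{-1}f$ provided by Theorem~\ref{thm:main_theorem}, and set $v:=\mathcal{T}u$. A direct computation shows that $v$ solves \eqref{eq:OU_initial:intro_gen} with source $\mathcal{T}(\mathcal{T}^{-1}f)=f$: one uses that $\mathcal{L}_\alpha$ commutes with spatial translations, that time-differentiating the scalar prefactor $e^{-C(t)}$ reproduces the zeroth order term $-c(t)\phi$, and that time-differentiating the translation produces---together with the contribution coming from the non-commutation of the translation with the Ornstein-Uhlenbeck drift $\langle Ax,D_x\phi\rangle$---exactly the drift $\langle a(t),D_x\phi\rangle$ of $\mathscr{L}_{\alpha,t}$, this last point dictating the precise time profile of the shift. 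Since $\mathcal{T}$ preserves bounded continuous spatial derivatives up to order two and is compatible with the integral-in-time formulation ($C$ and $\mu$ being absolutely continuous), $v$ is a solution in the sense of Definition~\ref{INT_SOL}; uniqueness of $v$ follows by applying $\mathcal{T}^{-1}$ to any two solutions and invoking the uniqueness part of Theorem~\ref{thm:main_theorem}.

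It then remains to transport the two estimates, and here the point is that the anisotropic Zygmund-H\"older semi-norm $[\,\cdot\,]_{C^{\alpha+\beta}_d}$ and each fractional Laplacian $\Delta^{\alpha_i}_{x_i}$ are invariant under spatial translations, while multiplication by the scalar $e^{-C(t)}\le 1$ can only decrease them, and moreover $\Delta^{\alpha_i}_{x_i}v(t,x)=e^{-C(t)}(\Delta^{\alpha_i}_{x_i}u)(t,x+\mu(t))$. For the Sobolev bound this yields $[v]_{\dot W^{\alpha,p}_d(\R^N_T)}\le[u]_{\dot W^{\alpha,p}_d(\R^N_T)}\le C_p\,\|\mathcal{T}^{-1}f\|_{L^p(\R^N_T)}$ by Theorem~\ref{thm:main_theorem}, and since $\|\mathcal{T}^{-1}f\|_{L^p(\R^N_T)}^p=\int_0^Te^{pC(t)}\|f(t,\cdot)\|_{L^p(\R^N)}^p\,dt\le e^{p\int_0^Tc(s)\,ds}\|f\|_{L^p(\R^N_T)}^p$, the factor $e^{\int_0^Tc(s)\,ds}$ in \eqref{eq:Estimate_full_operator} appears. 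For the Schauder bound one instead uses causality: $v(t,\cdot)$ depends on the source only through its restriction to $(0,t)$, so applying Theorem~\ref{thm:main_theorem} on the subinterval $(0,t)$ and using that $s\mapsto e^{C(s)}$ is non-decreasing, the factor $e^{C(t)}$ picked up by $\mathcal{T}^{-1}f$ on $(0,t)$ is exactly cancelled by the $e^{-C(t)}$ in front of $v(t,\cdot)=e^{-C(t)}u(t,\cdot+\mu(t))$, which gives $\sup_{t}[v(t,\cdot)]_{C^{\alpha+\beta}_d}\le C_\beta\sup_{t}[f(t,\cdot)]_{C^\beta_d}$ with no exponential; the $L^p$-norm in time cannot be treated this way, which is why the two estimates differ.

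The step I expect to be most delicate is the rigorous verification that $\mathcal{T}$ intertwines the two Cauchy problems at the level of the weak, integral-in-time notion of solution of Definition~\ref{INT_SOL}---in particular the bookkeeping of how the time-dependent translation interacts with the unbounded drift $\langle Ax,D_x\phi\rangle$ of the proxy operator, done without assuming any time regularity beyond that definition---together with the causality argument that underpins the removal of the exponential from the Schauder estimate; the remaining manipulations (preservation of the function space, invariance of the anisotropic norms under translations) are routine.
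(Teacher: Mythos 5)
Your argument is correct and follows essentially the same route as the paper: conjugate by $\mathcal{T}$, invoke Theorem~\ref{thm:main_theorem} (on $[0,t]$ for the Schauder bound, exploiting that $\tilde c$ is non-decreasing to cancel the exponential factors; on all of $[0,T]$ for the Sobolev bound, absorbing the factor into $e^{\int_0^T c}$), and use translation invariance of the anisotropic norms. One small slip: you wrote ``$u\in B_b(0,T;C^\infty_c(\R^N))$'' for the solution of \eqref{eq:OU_initial:intro_pert} with source $\mathcal{T}^{-1}f$, but it is the source $\mathcal{T}^{-1}f$ that lies in this class, not the solution $u$, which has no reason to be compactly supported.
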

\begin{proof}
We will use the following notation:
\[\tilde{c}(t) \, := \, \int_0^tc(s) \, ds, \quad \tilde{a}(t) \, := \, \int_0^ta(s) \, ds.\]

As explained before, it is not difficult to check that if $v$ is a solution to Cauchy Problem \eqref{eq:OU_initial:intro_gen}, then the function
\[u(t,x)\,:=\, \mathcal{T}^{-1}v(t,x) \, = \, e^{\tilde{c}(t)}v(t,x-\tilde{a}(t))\]
is the unique solution to \eqref{eq:OU_initial:intro_pert} with $\tilde{f}$ instead of $f$, where
\[\tilde{f}(t,x) \, := \, e^{\tilde{c}(t)} f(t,x-\tilde{a}(t)), \quad (t,x) \in \R^N_T.\]
Moreover, we have that $\tilde{f}$ is in $B_b\bigl(0,T;C^\infty_c(\R^N)\bigr)$.

Considering $t\le T$, we then notice from Theorem \ref{thm:main_theorem} (applied to $[0,t]\times \R^N$) that
\[  [u(t,\cdot)]_{C^{\alpha+\beta}_d(\R^N)}  \, \le \, C_\beta \sup_{s \in(0,t)} [\tilde{f}(s,\cdot)]_{C^\beta_d(\R^N)}.\]
Using now the invariance of the H\"older norm under translations, we can show that
\[[ v(t,\cdot)]_{C^{\alpha+\beta}_d(\R^N)} \, \le \, C_\beta e^{-\tilde{c}(t)}\sup_{s \in (0,t)} [ e^{\tilde{c}(s)}f(s,\cdot)]_{C^\beta_d(\R^N)} \,\le \, C_\beta \sup_{s \in(0,t)} [ f(s,\cdot)]_{C^\beta_d(\R^N)},\]
where in the last step we exploited that $\widetilde{c}(t)$ is non-decreasing. Taking the supremum with respect to $t$ on both sides of the above inequality, we obtain the first inequality in \eqref{eq:Estimate_full_operator}. For the second one,  notice from Theorem \ref{thm:main_theorem}, that
\begin{align*}
\sum_{i=0}^k\int_0^T e^{p\tilde{c}(t)} \Vert \Delta^{\alpha_i}_{x_i} v(t,\cdot)\Vert^p_{L^p(\R^N)} \, dt  &\le \,  C_p \int_0^T e^{p\tilde{c}(t)}\Vert f(t,\cdot)\Vert^p_{L^p(\R^N)} \, dt \, \\
&\le C_p \, e^{p\tilde{c}(T)} \int_0^T\Vert f(t,\cdot)\Vert^p_{L^p(\R^N)} \, dt.
\end{align*}
Using the fact that $e^{\tilde{c}(t)}\ge 1$ for all $t \in [0,T]$, we notice that
\[\sum_{i=0}^k\int_0^T \Vert \Delta^{\alpha_i}_{x_i} v(t,\cdot)\Vert^p_{L^p(\R^N)} \, dt \le C_p \,  e^{p\tilde{c}(T)} \int_0^T\Vert f(t,\cdot)\Vert^p_{L^p(\R^N)} \, dt,\]
and we have concluded.
\end{proof}

We now show how the results in Theorem \ref{thm:main_theorem} in the parabolic framework can be adapted to establish a-priori estimates for the elliptic one. More precisely, let us introduce
\begin{equation}\label{PERT_ELL}
\mathscr{L}_\alpha\phi(x) \, := \, \mathcal{L}^{\text{ou}}_\alpha\phi(x)+\langle a,D_x\phi(x)\rangle + \text{Tr}(S D^2_x \phi(x)),
\end{equation}
for a matrix $S$ in $\mathcal{S}_0(\R^N)$ and $a$ in $\R^N$. Let us mention that the Schauder and $L^p$ estimates are known for the \textit{unperturbed} operator in the diffusive case, i.e.\ the one obtained taking $\alpha=2 $ and $S=0$  in \eqref{PERT_ELL}. We can e.g.\ refer to the works \cite{Lunardi97}, \cite{Bramanti:Cupini:Lanconelli:Priola10} respectively. We strongly believe that under the current assumptions, the computations performed in \cite{Huang:Menozzi:Priola19} could be extended to derive such estimates when $\alpha\neq 2 $ and the matrix $A$ is invariant under dilations. Finally, we remark that when the matrix $A$ is invariant under dilations (i.e.\ $A=A_0$ as in \eqref{eq:def_A_0}), the constants $C_\beta$, $C_p$ in the estimates do not depend on the final time $T$.

\begin{corollary}
\label{coroll:Elliptic_res}
Under [\textbf{A}], let $g$ be in $C^\infty_c(\R^N)$ and $A=A_0$ as in \eqref{eq:def_A_0}. Assume that there exists a bounded, continuous (classic) solution $u\colon \R^N \to \R$ to the following elliptic equation:
\begin{equation}
\label{eq:Elliptic}
 \mathscr{L}_\alpha u(x)\, = \, g(x), \quad \text{on }\R^N.
\end{equation}
Then, such solution $u$ is unique and for any $p>1$ and any $\beta$ in $(0,1\wedge \alpha)$, it holds that
\begin{align*}
    [u]_{C^{2+\beta}_d(\R^N)} \,&\le\,C_\beta [g]_{C^\beta_d(\R^N)};\\
    [u]_{\dot W^{2,p}_d(\R^N)} \, &\le \, C_p\Vert g \Vert_{L^p(\R^N)},
\end{align*}
where $C_\beta$ and $C_p$ are the same constants appearing in Theorem \ref{thm:main_theorem}.
\end{corollary}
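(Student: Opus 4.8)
The plan is to derive the elliptic bounds from the parabolic ones in Corollary \ref{prop:Schauder_estimate_time}, crucially exploiting the fact recalled just before the statement that, when $A=A_0$, the constants $C_\beta,C_p$ there are \emph{independent of the final time} $T$. Since $g$ belongs to $C^\infty_c(\R^N)$, it is in particular a time independent element of $B_b(0,T;C^\infty_c(\R^N))$ for every $T>0$, and $\mathscr{L}_\alpha$ in \eqref{PERT_ELL} is precisely the operator $\mathscr{L}_{\alpha,t}$ of Corollary \ref{prop:Schauder_estimate_time} with $c\equiv 0$ and $a$, $S$ constant. Hence that corollary furnishes a unique solution $w$ of $\partial_t w=\mathscr{L}_\alpha w+g$ with $w(0,\cdot)=0$, which, by compatibility of the Cauchy problems on nested time intervals, is consistently defined on $(0,\infty)\times\R^N$ and satisfies, uniformly in $T$, both $\sup_{t}[w(t,\cdot)]_{C^{\alpha+\beta}_d}\le C_\beta [g]_{C^\beta_d}$ and $[w]^p_{\dot W^{2,p}_d(\R^N_T)}\le C_p^p\,T\,\|g\|^p_{L^p(\R^N)}$, with $C_\beta,C_p$ the constants of Theorem \ref{thm:main_theorem}.

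The key step is then to average the associated homogeneous evolution in time. Put $z:=u+w$; using $\mathscr{L}_\alpha u=g$ one checks from \eqref{int2} that $z(t,x)=u(x)+\int_0^t\mathscr{L}_\alpha z(s,x)\,ds$, i.e. $z$ is the integral solution of the homogeneous problem with datum $u$. Setting $\zeta(T,x):=\int_0^T z(t,x)\,dt$ and integrating this identity, $\zeta(T,x)=\int_0^T[u(x)+\mathscr{L}_\alpha\zeta(t,x)]\,dt$, so $\zeta$ is the integral solution of the Cauchy problem driven by $\mathscr{L}_\alpha$ with source $u$ and zero datum. Applying to $\zeta$ the $T$-independent estimates of Corollary \ref{prop:Schauder_estimate_time} gives $\sup_T[\zeta(T,\cdot)]_{C^{\alpha+\beta}_d}\le C_\beta[u]_{C^\beta_d}$ and $\int_0^\tau[\zeta(T,\cdot)]^p_{\dot W^{2,p}_d}\,dT\le C_p^p\,\tau\,\|u\|^p_{L^p(\R^N)}$. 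Since $u=\frac{1}{T}\zeta(T,\cdot)-\frac{1}{T}\int_0^T w(t,\cdot)\,dt$, the Hölder bound gives $[\frac{1}{T}\zeta(T,\cdot)]_{C^{\alpha+\beta}_d}\le C_\beta T^{-1}[u]_{C^\beta_d}\to 0$ as $T\to\infty$, while by convexity of the seminorm $[\frac{1}{T}\int_0^T w(t,\cdot)\,dt]_{C^{\alpha+\beta}_d}\le C_\beta[g]_{C^\beta_d}$ uniformly in $T$; the triangle inequality and $T\to\infty$ then yield $[u]_{C^{\alpha+\beta}_d}\le C_\beta[g]_{C^\beta_d}$. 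For the Sobolev estimate one proceeds identically, using Jensen and Minkowski to get $[\frac{1}{T}\int_0^T w(t,\cdot)\,dt]_{\dot W^{2,p}_d}\le C_p\|g\|_{L^p}$ and picking $T_n\to\infty$ with $[\zeta(T_n,\cdot)]^p_{\dot W^{2,p}_d}$ comparable to its time average over $(T_n/2,T_n)$ (hence $\le 2C_p^p\|u\|^p_{L^p}$), so that $[\frac{1}{T_n}\zeta(T_n,\cdot)]_{\dot W^{2,p}_d}\to 0$ and $[u]_{\dot W^{2,p}_d}\le C_p\|g\|_{L^p}$. Uniqueness follows by running the same scheme with $g=0$: the difference of two bounded solutions then has vanishing anisotropic Hölder seminorm and, being bounded, must be constant.

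The main obstacle is not the computation above but the legitimacy of feeding $u$ as a source into Corollary \ref{prop:Schauder_estimate_time}: a priori $u$ is only bounded and continuous, hence neither compactly supported, nor known to lie in $C^\beta_d$, nor in $L^p(\R^N)$. One therefore needs, as a preliminary, that the bounded classical solution $u$ has finite $[u]_{C^\beta_d}$ (for the Schauder part) and the relevant second order integrability (for the Sobolev part); this should be obtained from interior (local) Schauder and $L^p$ estimates for $\partial_t-\mathscr{L}_\alpha$, valid in the degenerate setting under \textbf{[K]}, combined with the boundedness of $u$. Once this is available, $u$ can be approximated in the relevant seminorms by functions of $B_b(0,T;C^\infty_c(\R^N))$ — for instance by mollification together with a slowly varying cut-off $\chi(\cdot/R)$, whose contribution to the seminorms vanishes as $R\to\infty$ — and the parabolic estimates are stable under this limit. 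The failure of $u$ to be $L^p$ in the Sobolev case is circumvented by replacing $u$ with $u$ minus a suitable constant adapted to its behaviour at infinity and using that the operators $\Delta^{\alpha_i}_{x_i}$ annihilate constants, so that only the genuine second order information of $u$ enters. With these technical points settled, it is precisely the dilation invariance $A=A_0$, through the $T$-independence of $C_\beta,C_p$, that makes the limit $T\to\infty$ deliver the sharp constants of Theorem \ref{thm:main_theorem}.
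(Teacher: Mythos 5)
Your proposal is correct in spirit but takes a genuinely different, and markedly more roundabout, route than the paper. The paper does not run the heat evolution from $u$ nor average in time: it simply sets $v(t,x):=u(x)\,t/T$, observes that $v$ solves $\partial_t v=\mathscr{L}_\alpha v+f$ with $v(0,\cdot)=0$ and $f(t,x)=u(x)/T-g(x)\,t/T$, then applies Theorem~\ref{thm:main_theorem} (with $T$-independent constants since $A=A_0$) and lets $T\to\infty$. The $1/T$ weighting of the $u$-part of the source is built into the ansatz, so the $u$-contribution disappears at the level of a single estimate, and for the Sobolev bound a one-line computation gives $\Vert \Delta^{\alpha_i}_{x_i}v\Vert_{L^p(\R^N_T)}^p = \tfrac{T}{p+1}\Vert\Delta^{\alpha_i}_{x_i}u\Vert_{L^p(\R^N)}^p$, which makes the limit $T\to\infty$ entirely elementary. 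Your scheme — forming $z=u+w$, integrating in time to obtain $\zeta(T,\cdot)$ as the integral solution driven by source $u$, and then writing $u=\tfrac1T\zeta(T,\cdot)-\tfrac1T\int_0^T w$ — is a valid Abel-averaging/resolvent-type argument and the algebra you perform is correct, but it passes through the homogeneous evolution and requires an extra selection of a subsequence $T_n$ in the Sobolev case (your Chebyshev-type choice gives a subsequence along which the time average controls the pointwise value, but that needs to be stated more carefully). Both arguments need $[u]_{C^\beta_d}$ finite and some $L^p$-integrability of $u$ and both feed a non-compactly-supported source into estimates stated for $B_b(0,T;C^\infty_c(\R^N))$; you are right to flag these points, but they are shared with the paper's proof (which relies implicitly on a density/approximation step) and are not specific to your strategy. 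The net effect is that you reach the same conclusion, for the same reason ($T$-independence of $C_\beta,C_p$ when $A=A_0$), but with a longer detour: the paper's explicit ansatz $v=u\,t/T$ shortcuts all of the time-averaging machinery.
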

\begin{proof}
We start with the issue of uniqueness.  Let us consider a solution $u\colon \R^N \to \R$ to the elliptic Equation \eqref{eq:Elliptic}. Fixed a final time $T>0$, it is easy to check that the function $v\colon \R^N_T\to \R$ given by  $v(t,x):=u(x)t/T$ is then a solution of the following Cauchy Problem:
\[
\begin{cases}
\partial_tv(t,x)\, = \, \mathscr{L}_\alpha v(t,x) + f(t,x), &\mbox{ on } \R^N_T;\\
v(0,x) \, = \, 0, &\mbox{ on } \R^N,
\end{cases}
\]
where $f(t,x)=u(x)/T-g(x)t/T$. Thus, the uniqueness of $u$ follows immediately from the uniqueness of solutions in Theorem \ref{thm:main_theorem}. Moreover, since $A=A_0$, we know that $v$ satisfies Estimates \eqref{eq:Estimates} for constants $C_p,C_\beta$ independent from $T$. Hence, we have that
\[\begin{split}
[u]_{C^{\alpha+\beta}_d(\R^N)} \, &= \, \sup_{t\in [0,T]}[v(t,\cdot)]_{C^{\alpha+\beta}_d(\R^N)} \, \le \, C_\beta \sup_{t\in (0,T)}[f(t,\cdot)]_{C^{\beta}_d(\R^N)} \\
&\le \, C_\beta\left(\frac{[u]_{C^\beta_d(\R^N)}}{T}+[g]_{C^\beta_d(\R^N)}\right).
\end{split}\]
We can now let $T$ go to infinity in the equation above. Recalling that $C_\beta$ is independent from $T$, we conclude that
\[[u]_{C^{\alpha+\beta}_d(\R^N)} \, \le\,  C_\beta[g]_{C^\beta_d(\R^N)}.\]
In order to prove the Sobolev estimates, we firstly notice that for any $i\in \llbracket 0,k\rrbracket$,
\[
\Vert \Delta^{\alpha_i}_{x_i}v\Vert^p_{L^p(\R^N_T)} \, = \, \int^T_0\bigl(\frac{t}{T}\bigr)^p\int_{\R^N}|\Delta^{\alpha_i}_{x_i}u(z)|^p \, dzdt \, = \, \frac{T}{p+1}\Vert \Delta^{\alpha_i}_{x_i}u\Vert^p_{L^p(\R^N)}.\]
Using Estimates \eqref{eq:Estimates} for $v$, we then show that
\[
\begin{split}
\sum_{i=0}^d\Vert \Delta^{\alpha_i}_{x_i} u\Vert^p_{L^p(\R^N_T)} \, &= \,\frac{p+1}{T}  \sum_{i=0}^d  \Vert \Delta^{\alpha_i}_{x_i}v\Vert^p_{L^p(\R^N_T)} \, \le \, C^p_p\frac{p+1}{T}\Vert f \Vert^p_{L^p(\R^N_T)} \\
&\le \, C^p_p\frac{p+1}{T}\int_0^T\int_{\R^N}\left|\frac{u(z)}{T}-g(z)\frac{t}{T}\right|^p \, dzdt \\
&= \, C^p_p (p+1)\int_0^1\int_{\R^N}\left|\frac{u(z)}{T}-sg(z)\right|^p \, dzds,
\end{split}
\]
where, in the last step, we applied the change of variables $s=t/T$. Letting $T$ go to infinity, we finally notice that
\[ \sum_{i=0}^d    \Vert \Delta^{\alpha_i}_{x_i}u\Vert^p_{L^p(\R^N)} \, \le \, C^p_p\Vert g\Vert^p_{L^p(\R^N)} \int_0^1 (p+1)s^p \, ds \ = \,
C^p_p\Vert g \Vert^p_{L^p(\R^N)}.\]
We have thus concluded the proof of Corollary \ref{coroll:Elliptic_res}.
\end{proof}

\setcounter{equation}{0}
\section{Proof of the main result}
\label{SEC_PROOF}

Let $u\colon [0,T]\times \R^N\to \R$ be the solution to the initial (i.e.\ non-perturbed) Cauchy problem \eqref{eq:OU_initial:intro}. We can then introduce a function $v(t,x) := u(t,e^{-tA}x)$. Since $u$ is Lipschitz continuous in $t\in (0,T)$, it is possible to differentiate the function $u(t,x)=v(t,e^{tA}x)$ with respect to $t$, for almost every $t$ in $[0,T]$. It then follows that $v$ is the solution to
 \begin{equation} \label{ma}
 \begin{cases}
 \partial_tv(t, x)   \, = \, \tilde{\mathcal{L}}_{\alpha,t} v(t,x)  + \tilde f(t,x);\\
 v(0,x) =0,
 \end{cases}
\end{equation}
where we denoted $\tilde{f}(t,z):= f(t,e^{-tA}z)$ and
\begin{multline*}
\tilde{\mathcal{L}}_{\alpha,t} \phi(x) \, := \, \mathds{1}_{\{\alpha=2\}}\text{Tr} \left( e^{tA} B
 e^{tA^*} D^2_x v(t, x) \right)\\
 + \mathds{1}_{\{\alpha\neq2\}}\int_{\R^d}\left[v(t,x+e^{tA}\sigma z)-v(t,x)-\langle D_xv(t,x),e^{tA}\sigma z\rangle\mathds{1}_{B(0,1)}(\sigma z)\right] \nu_\alpha(dz).
 \end{multline*}
Noticing that $\det (e^{A_0t})=1$ so that $\Vert f\Vert_{L^p(\R^N_T)}=\Vert \tilde{f}\Vert_{L^p(\R^N_T)}$, the known estimates on $u$ in \eqref{eq:Max_princ}, \eqref{eq:Sobolev_estim} and \eqref{eq:Schauder_estim} can be rewritten in terms of the function $v$ as:
\begin{equation} \label{eq:Estimates_cv}
\begin{split}
\sup_{(t,x)\in[0,T]\times\R^N}|v(t,x)| \, &\le \, T \sup_{(t,x)\in\R^N_T}|\tilde{f}(t,x)|;\\
    \sup_{t\in [0,T]} [v(t,\cdot)]_{C^{\alpha+\beta}_{d,A}(\R^N)} \,&\le\,C_\beta \sup_{t\in (0,T)} [\tilde{f}(t,\cdot)]_{C^\beta_{d,A}(\R^N)};\\
    \text{ if $A=A_0$ in \eqref{eq:def_A_0}}, \qquad [v]_{\dot{W}^{\alpha,p}_{d,A} (\R^N_T)} \, &\le \, C_p\, \Vert \tilde{f}\Vert_{L^p(\R^N_T)},
\end{split}
\end{equation}
where we have denoted
\[
[v(t,\cdot)]_{C^{\alpha+\beta}_{d,A}(\R^N)} \, := \, [v(t,e^{At}\cdot)]_{C^{\alpha+\beta}_{d}(\R^N)}, \qquad [v]^p_{\dot{W}^{\alpha,p}_{d,A}(\R^N_T)} \, := \, \sum_{i=0}^k \|\Delta^{\alpha_i,A}_{x_i}v\|^p_{L^p(\R^N_T)}\]
and
\[
\Delta^{\alpha_i,A}_{x_i}v(t,x) \, := \, {\rm p.v.}\int_{\R^{\mathfrak d_i}}[v(t, x+ e^{tA} E_iz))-v(t, x)] \frac{dz}{|z|^{\mathfrak d_i+2\alpha_i}}.
\]

Clearly, it is possible to rewrite \eqref{ma} in the form of Cauchy problem \eqref{eq:Cauchy_problem_gen0} with $Q(t)=\mathds{1}_{\{\alpha=2\}}e^{tA}Be^{tA^\ast}$ and $\gamma(t)=\mathds{1}_{\{\alpha\neq 2\}}e^{tA}\sigma$, exploiting the symmetry of the measure $\nu_\alpha$.
By the arguments before and the maximum principle (Theorem \ref{thm:max_princ}, we already know that for any $\tilde{f}$ in $B_b(0,T;C^\infty_c(\R^N))$, there exists a unique solution $v$ to Cauchy problem \eqref{ma} and it satisfies the estimates in \eqref{eq:Estimates_cv}.

Let us consider again the {bounded measurable} perturbation $S\colon(0,T)\to \mathcal{S}_0(\R^N)$. We would like to show that there exists a unique solution $w\colon [0,T]\times\R^N\to \R$ to
\begin{equation} \label{ma0}
 \begin{cases}
 \partial_tw(t, x)   \, = \, \tilde{\mathcal{L}}_{\alpha,t} w(t,x) + \text{Tr} \left( e^{tA} S(t) e^{tA^*} D^2_x w(t, x) \right)  + \tilde f(t,x);\\
 w(0,x) =0,
 \end{cases}
\end{equation}
and that the estimates in \eqref{eq:Estimates_cv} hold for $w$ as well, with the same constants $C_\beta$, $C_p$ appearing before. Indeed, the backward change of variables argument explained at the beginning of the section will then allow us to conclude the proof of Theorem \ref{thm:main_theorem}. More precisely, we will prove at the end of the current section the following, more general, result:

\begin{prop}
\label{prop:pass_to_lim}
Let $\tilde{f}$ be in $B_b(0,T;C^\infty_c(\R^N))$ and $\tilde{S}\colon(0,T)\to\mathcal{S}_0(\R^N)$ {bounded measurable}. Then, there exists a unique solution $w$ to:
\begin{equation} \label{eq:PDE_pass_limit}
\begin{cases}
\partial_tw(t, x)   \, = \, \tilde{\mathcal{L}}_{\alpha,t} w(t,x) + {\rm Tr} \left( \tilde{S}(t)D^2_x w(t, x) \right)+\tilde{f}(t,x),\\
w(0,x) =0.
\end{cases}
\end{equation}
Moreover, the estimates in \eqref{eq:Estimates_cv} hold again with $w$ instead of $v$ and with the same constants appearing before.
\end{prop}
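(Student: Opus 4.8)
The strategy follows the Poisson perturbative scheme: we first replace the merely measurable diffusion ${\rm Tr}(\tilde S(t)D^2_x\cdot)$ by a \emph{bounded} operator of pure jump type, for which the equation can be solved perturbatively on top of the proxy $\tilde{\mathcal L}_{\alpha,t}$, and only afterwards we remove the regularisation. For $h>0$, let $\tilde S(t)=\sum_{j=1}^N\sigma_j(t)\,v_j(t)\otimes v_j(t)$ be a (Borel measurable in $t$) spectral decomposition, $\sigma_j(t)\ge 0$, and set
\[
\mathcal J^h_t\phi(x):=\sum_{j=1}^N\frac{\sigma_j(t)}{h^2}\bigl[\phi(x+h\,v_j(t))+\phi(x-h\,v_j(t))-2\phi(x)\bigr],
\]
a bounded operator on $C_b(\R^N)$ of the form $\lambda^h(t)\int_{\R^N}(\phi(\cdot+z)-\phi(\cdot))\,\rho^h_t(dz)$ with rate $\lambda^h(t)=2h^{-2}{\rm Tr}\,\tilde S(t)\le 2Nh^{-2}\|\tilde S\|_\infty$ and $\rho^h_t$ a symmetric probability measure on $\R^N$, satisfying $\mathcal J^h_t\phi\to {\rm Tr}(\tilde S(t)D^2_x\phi)$ locally uniformly as $h\to0$ for $\phi\in C^\infty_b(\R^N)$. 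Since $\tilde{\mathcal L}_{\alpha,t}$ has $x$-independent coefficients (the Ornstein--Uhlenbeck drift has been removed by the change of variables $x\mapsto e^{-tA}x$), both $\tilde{\mathcal L}_{\alpha,t}$ and $\mathcal J^h_t$ are translation invariant, and the associated process can be realised as $X_r+\Xi^h_r$, with $X$ the proxy process and $\Xi^h$ an independent, time-inhomogeneous compound Poisson process with rate $\lambda^h(\cdot)$ and jump law $\rho^h_\cdot$. This is precisely the device of Lemma \ref{lemma:finite_diff}: the bounded jump perturbation produces a unique solution $w^h$ of $\partial_tw^h=\tilde{\mathcal L}_{\alpha,t}w^h+\mathcal J^h_tw^h+\tilde f$, $w^h(0,\cdot)=0$, in the sense of Definition \ref{INT_SOL}, admitting the representation
\[
w^h(t,x)=\E\bigl[\hat v^{\Xi^h}(t,x+\Xi^h_t)\bigr],
\]
where $\hat v^{\xi}$ solves the \emph{proxy} Cauchy problem \eqref{ma} with source $\hat f^{\xi}(s,y):=\tilde f(s,y-\xi_s)$; note $\hat f^{\xi}\in B_b(0,T;C^\infty_c(\R^N))$ and, for every path $\xi$ and every $s$, $\hat f^{\xi}(s,\cdot)$ is a translate of $\tilde f(s,\cdot)$.

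The uniform-in-$h$ estimates for $w^h$ follow from this representation together with the invariance under spatial translations of all quantities at play: the seminorms $[\cdot]_{C^{\alpha+\beta}_{d,A}}$, $[\cdot]_{\dot W^{\alpha,p}_{d,A}}$, the sup-norm, the Lebesgue measure and the operators $\Delta^{\alpha_i,A}_{x_i}$ are all translation invariant, so the proxy estimates \eqref{eq:Estimates_cv} applied to $\hat v^{\xi}$ yield, for every $\xi$, the same bounds expressed in terms of $\tilde f$; these bounds are preserved under the expectation by convexity (sublinearity of the Zygmund--H\"older seminorm for the Schauder bound, Jensen's inequality and Fubini--Tonelli for the Sobolev one, Jensen for the maximum-principle one). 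Hence $w^h$ satisfies \eqref{eq:Estimates_cv} with the very same constants $C_\beta$, $C_p$, uniformly in $h$. The representation also provides $h$-uniform control on the lower-order terms: commuting derivatives with the translation-invariant, conservative proxy propagator $P^{\rm proxy}_{s,t}$ (which is a contraction on $L^\infty$) gives $\|D^{\zeta}_xw^h(t,\cdot)\|_\infty\le\E\,\|D^{\zeta}_x\hat v^{\Xi^h}(t,\cdot)\|_\infty\le T\sup_s\|D^{\zeta}_x\tilde f(s,\cdot)\|_\infty$ for every multi-index $\zeta$; and from the integral equation \eqref{int2}, using the second-order Taylor bound $|\mathcal J^h_sw^h(s,x)|\le {\rm Tr}\,\tilde S(s)\,\sup_s\|D^2_xw^h(s,\cdot)\|_\infty$ together with the boundedness of $\tilde{\mathcal L}_{\alpha,s}w^h$ (controlled by $\|w^h\|_\infty$ and $\|D^2_xw^h\|_\infty$), one obtains that $w^h$ and its spatial derivatives are Lipschitz in $t$, uniformly in $h$.

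It then remains to let $h\to0$. By the uniform bounds and equicontinuity just described, an Ascoli--Arzel\`a and diagonal argument produces a subsequence $w^{h_n}$ converging, together with all spatial derivatives, locally uniformly on $[0,T]\times\R^N$ to some $w$. We pass to the limit in \eqref{int2}: the only delicate term is $\mathcal J^{h_n}_sw^{h_n}(s,x)$, which by Taylor's formula equals $\sum_j\sigma_j(s)\int_0^1(1-\lambda)\bigl[\partial^2_{v_j(s)}w^{h_n}(s,x+\lambda h_n v_j(s))+\partial^2_{v_j(s)}w^{h_n}(s,x-\lambda h_n v_j(s))\bigr]\,d\lambda$, hence converges pointwise to $\sum_j\sigma_j(s)\partial^2_{v_j(s)}w(s,x)={\rm Tr}(\tilde S(s)D^2_xw(s,x))$; the convergence inside $\int_0^t\cdots\,ds$ is justified by dominated convergence, the integrand being bounded by ${\rm Tr}\,\tilde S(s)\,\sup_{n}\|D^2_xw^{h_n}\|_\infty$, and it is at this point, and only here, that mere measurability of $t\mapsto\tilde S(t)$ suffices. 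Thus $w$ solves \eqref{eq:PDE_pass_limit} in the sense of Definition \ref{INT_SOL}, inherits estimates \eqref{eq:Estimates_cv} by lower semicontinuity of the norms (Fatou for the $L^p$ one, pointwise convergence of difference quotients for the Hölder one), and is the unique such solution by Theorem \ref{thm:max_princ} (in the diffusive case $\alpha=2$, by the classical maximum principle recalled after \eqref{eq:Max_princ}). Undoing the change of variables $x\mapsto e^{tA}x$ as at the beginning of the section transfers everything back, completing Proposition \ref{prop:pass_to_lim} and, with $\tilde S(t)=e^{tA}S(t)e^{tA^*}$, Theorem \ref{thm:main_theorem}. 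I expect the main obstacle to be exactly this last passage: obtaining the $h$-uniform lower-order and equicontinuity bounds is only feasible through the probabilistic representation, since the operator norm of $\mathcal J^h_t$ blows up like $h^{-2}$, and one must then verify that the limit genuinely solves the target equation in the integral sense under only bounded measurability of $\tilde S$.
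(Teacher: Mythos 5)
Your overall scheme (approximate the second-order perturbation by a jump operator, exploit the Poisson representation to inherit the proxy estimates with the same constants, then pass to the limit by Arzel\`a--Ascoli) is the paper's scheme. But the specific finite-difference decomposition you pick is \emph{not} the one the paper uses, and the difference is not cosmetic. You write $\tilde S(t)=\sum_j\sigma_j(t)\,v_j(t)\otimes v_j(t)$ and form $\mathcal J^h_t\phi=\sum_j\frac{\sigma_j(t)}{h^2}[\phi(\cdot+hv_j(t))+\phi(\cdot-hv_j(t))-2\phi(\cdot)]$, which you then describe as ``precisely the device of Lemma~\ref{lemma:finite_diff}.'' It is not: Lemma~\ref{lemma:finite_diff} concerns a \emph{single} operator $\lambda\,\delta_{l(t)}$ with a \emph{constant} real number $\lambda$ (the Poisson intensity) and all the time-dependence confined to the shift $l(t)$. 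Your decomposition puts the time-dependence in the \emph{coefficient} $\sigma_j(t)/h^2$, so you are forced into a time-inhomogeneous compound Poisson process, i.e.\ a genuine strengthening of Lemma~\ref{lemma:finite_diff} that you do not prove. This is precisely what the paper's choice $J^{\tilde S}_{t,\varepsilon}\phi=\frac{1}{2\varepsilon^2}\sum_i[\delta_{l_i(t)}+\delta_{-l_i(t)}]\phi$ with $l_i(t)=\varepsilon\sqrt{\tilde S(t)}\,e_i$ is engineered to avoid: by using the square root, the eigenvalues are absorbed into the jump \emph{directions}, the intensities $\lambda_i=\tfrac12\varepsilon^{-2}$ are constant, and Proposition~\ref{prop:fin_diff_perturb} follows by literally iterating Lemma~\ref{lemma:finite_diff} $2N$ times. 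If you insist on the spectral decomposition you must do the same absorption, i.e.\ take shifts $h\sqrt{\sigma_j(t)}\,v_j(t)$ with coefficient $h^{-2}$ --- at which point you have effectively rediscovered $\sqrt{\tilde S(t)}$ anyway, since $\sqrt{\sigma_j(t)}\,v_j(t)=\sqrt{\tilde S(t)}\,v_j(t)$.

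There is a second, related soft spot: you assert without comment that a Borel-measurable spectral decomposition $t\mapsto(\sigma_j(t),v_j(t))$ exists. It does, but it requires a measurable-selection argument because the eigenvectors are not continuous (nor even uniquely determined) across eigenvalue crossings. The paper sidesteps this entirely: $M\mapsto\sqrt M$ is a continuous map on $\mathcal S_0(\R^N)$ (see Lemma~\ref{lemma:measur_square_root} and the footnote with the integral representation), so $t\mapsto\sqrt{\tilde S(t)}$ is automatically Borel with no selection theorem needed. So your route is morally the same but manufactures two extra obstacles --- a time-dependent Poisson rate and a measurable choice of eigenvectors --- that the square-root trick is specifically designed to remove. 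Everything else in your write-up (translation invariance of the norms and of $\Delta^{\alpha_i,A}_{x_i}$, preservation under expectation by Jensen/Fubini and sublinearity of the seminorms, $h$-uniform smoothness of $w^h$ via the representation, dominated convergence in the integral formulation using the second-order Taylor bound for $\mathcal J^{h_n}_s$, lower semicontinuity of the seminorms under locally uniform convergence, uniqueness from the maximum principle, and undoing the change of variables $x\mapsto e^{tA}x$) matches the paper's argument.
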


It is clear that Theorem \ref{thm:main_theorem} will then follow from the previous result by choosing $\tilde{S}(t):=  e^{tA} S(t) e^{tA^*}$.
The crucial observation in the proof of the above result is that, at least formally, we can approximate the diffusive perturbation $\tilde{S}(t)$ as follows:
\begin{equation}
\label{eq:approx_fin_diff}
\begin{split}
\frac{1}{2}\text{Tr}\left(\tilde{S}(t)D^2_x\phi(x)\right) \, &\approx \, \frac{1}{2}\sum_{i=1}^N\frac{1}{\varepsilon^2}\left[\phi(x+\varepsilon \sqrt{\tilde{S}(t)}e_i)-2\phi(x)+\phi(x-\varepsilon \sqrt{\tilde{S}(t)}e_i)\right] \\
&= \, \sum_{i=1}^N\lambda_{i}\left(\delta_{l_{i}(t)}-\delta_{-l_{i}(t)}\right)\phi(x) \, =: \, J^{\tilde{S}}_{t,\varepsilon}\phi(x),
\end{split}
\end{equation}
where $\lambda_{i}=\frac{1}{2}\varepsilon^{-2}$, $l_{i}(t)=\varepsilon\sqrt{\tilde{S}(t)}e_i$ and $\delta_y\phi(\cdot):=\phi(\cdot+y)-\phi(\cdot)$ is the first order finite difference operator.

In order to apply the above heuristic argument in our case, when $\tilde{S}(t)$ is only Borel measurable, we firstly need to show the existence of a Borel measurable principal square root for $\tilde{S}(t)$. While this fact is well-known if $\tilde{S}(t)$ is positive-definite, a proof of the following result could be found in \cite[Result $5.4$]{Reid70}\footnote{
For the sake of completeness of the work, we also highlight that the measurability of the principal square root of the matrix $\tilde{S}(t)$ could be obtained from the following integral representation
\[\sqrt{\tilde{S}(t)}\,= \, c\int_0^{+\infty}\theta^{-3/2}(I-\exp(-\theta \tilde{S}(t)))\, d\theta,\]
where $c>0$ is a normalising constant, noticing that the exponential of a measurable matrix is still measurable.
}.

\begin{lemma}
\label{lemma:measur_square_root}
Let $Q\colon (0,T)\to \mathcal{S}_0(\R^N)$ be a measurable function. Then, there exists a measurable function $\sqrt{Q}\colon (0,T)\to \mathcal{S}_0(\R^N)$ such that $\sqrt{Q}(t)$ is a square root of $Q(t)$ for any fixed $t$ in $(0,T)$.
\end{lemma}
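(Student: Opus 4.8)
The plan is to reduce the statement to a single elementary topological fact: that the principal square root is a \emph{continuous} map on $\mathcal{S}_0(\R^N)$. Once this is available, the lemma is immediate, because a measurable function composed with a continuous (hence Borel) function is measurable. Concretely, let $\mathfrak{r}\colon \mathcal{S}_0(\R^N)\to\mathcal{S}_0(\R^N)$ send a symmetric non-negative definite matrix $M$ to its unique symmetric non-negative definite square root $\mathfrak{r}(M)$ (existence and uniqueness being the classical spectral theorem). Then one simply sets $\sqrt{Q}:=\mathfrak{r}\circ Q$, and it remains only to check that $\mathfrak{r}$ is continuous.

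To prove continuity of $\mathfrak{r}$, I would take $M_n\to M$ in $\mathcal{S}_0(\R^N)$ and argue by a compactness-plus-uniqueness trick. First, $(\mathfrak{r}(M_n))_n$ is bounded: since $\mathfrak{r}(M_n)$ is symmetric non-negative definite, its spectral norm is its largest eigenvalue, whose square is the largest eigenvalue of $M_n$, so $\|\mathfrak{r}(M_n)\|=\|M_n\|^{1/2}$, and $\|M_n\|$ is bounded because $M_n$ converges (all norms being equivalent in finite dimension). Now let $(\mathfrak{r}(M_{n_k}))_k$ be any subsequence; extract a further subsequence converging to some matrix $R$. As $\mathcal{S}_0(\R^N)$ is closed, $R\in\mathcal{S}_0(\R^N)$, and continuity of $X\mapsto X^2$ together with $\mathfrak{r}(M_{n_k})^2=M_{n_k}\to M$ gives $R^2=M$. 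By uniqueness of the non-negative definite square root, $R=\mathfrak{r}(M)$. Since every subsequence of $(\mathfrak{r}(M_n))_n$ has a further subsequence converging to the same limit $\mathfrak{r}(M)$, the whole sequence converges to $\mathfrak{r}(M)$. This establishes continuity of $\mathfrak{r}$, and hence the lemma.

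There is no genuinely hard step; the only point that deserves a moment's care is the continuity of $\mathfrak{r}$ at the boundary of $\mathcal{S}_0(\R^N)$, where eigenvalues may coalesce and the spectral projections are discontinuous, so one cannot argue naively eigenvalue-by-eigenvalue — the compactness argument above is precisely what circumvents this. As an alternative (and this is the route hinted at in the footnote), one can bypass continuity altogether and work from the integral representation $\mathfrak{r}(M)=c\int_0^{\infty}\theta^{-3/2}\bigl(I-e^{-\theta M}\bigr)\,d\theta$: for each $\theta$ the map $t\mapsto I-e^{-\theta Q(t)}$ is measurable (the matrix exponential being continuous), the integrand is jointly measurable in $(\theta,t)$ and dominated, uniformly on compact $t$-sets, by an integrable function of $\theta$, so Fubini's theorem directly yields measurability of $t\mapsto\mathfrak{r}(Q(t))$. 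I would present the continuity argument as the main proof, since it is shorter and entirely self-contained, and mention the integral-representation argument only as a remark.
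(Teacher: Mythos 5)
Your proof is correct, and it takes a different route from the paper. The paper gives no proof of its own: it refers to \cite{Reid70} and, in a footnote, offers the integral formula
\[
\sqrt{\tilde S(t)} \, = \, c\int_0^{+\infty}\theta^{-3/2}\bigl(I - \exp(-\theta\tilde S(t))\bigr)\, d\theta
\]
as an alternative, the measurability of $t\mapsto\sqrt{\tilde S(t)}$ following from measurability of the matrix exponential and the (dominated) integral. Your main argument is instead a self-contained topological one: the principal square root $\mathfrak r\colon \mathcal S_0(\R^N)\to\mathcal S_0(\R^N)$ is continuous, hence Borel, so $\sqrt Q = \mathfrak r\circ Q$ is measurable as a composition of a Borel map with a measurable map. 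The compactness-plus-uniqueness argument you give for the continuity of $\mathfrak r$ is the right way to handle the boundary of $\mathcal S_0(\R^N)$, where eigenvalue crossings make naive spectral-projection arguments fail; the three ingredients you use (boundedness via $\|\mathfrak r(M_n)\|=\|M_n\|^{1/2}$, closedness of $\mathcal S_0(\R^N)$, and uniqueness of the non-negative definite symmetric square root) are each correct, and the ``every subsequence has a further subsequence with the same limit'' step validly upgrades subsequential to full convergence. Your approach has the advantage of being entirely elementary and not relying on an external citation or on knowing the integral representation; the paper's footnote route has the advantage of an explicit formula. Both are valid, and you correctly identify the integral-representation argument as the one hinted at in the paper.
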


As a first step in our method of proof, we will show that it is possible to add a finite difference operator, like the one appearing in \eqref{eq:approx_fin_diff}, to the Cauchy problem without modifying the estimates. This is indeed the key point in the proof method of \cite{Krylov:Priola17} and it is performed through a probabilistic argument which relies on the use of a suitable corresponding Poisson process. More precisely, we have the following result:

\begin{lemma}
\label{lemma:finite_diff}
Let $\lambda$ be a real number, $l\colon (0,T)\to \R^N$ a {bounded measurable} function and $\tilde{f}$ in $B_b(0,T;C^\infty_c(\R^N))$. Then, there exists a unique solution $w$ to
\begin{equation} \label{ma3} \begin{cases} \partial_tw(t, x)   \, = \, \tilde{\mathcal{L}}_{\alpha,t} w(t,x)  + \lambda \delta_{l(t)}w(t,x)+\tilde f(t,x);\\
 w(0,x) \, = \, 0,
 \end{cases}
\end{equation}
Moreover, the estimates in \eqref{eq:Estimates_cv} hold for $w$ as well, with the \emph{same} constants appearing therein.
\end{lemma}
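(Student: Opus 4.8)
The plan is to follow the Poisson perturbation argument of \cite{Krylov:Priola17}. First I would introduce, on an auxiliary probability space, a Poisson process $(N_t)_{t\ge 0}$ of intensity $\lambda$ (assume $\lambda>0$; the case $\lambda\le 0$ is handled by a symmetric device or by noting $\delta_{l}+\delta_{-l}$ type combinations arise in the application, but to be safe one treats $\lambda\in\R$ by writing the finite difference operator as a genuine generator of a compound-Poisson jump process with a signed weight, which is still admissible in the integral formulation). For a jump process $X_t^x$ that at the jump times of $N$ performs the deterministic shift $x\mapsto x+l(t)$ and otherwise is constant, the function
\begin{equation}
\label{eq:proof_planFD_def_w}
w(t,x)\,:=\,\E\!\left[\int_0^t \tilde{f}\bigl(s, x+\textstyle\int_s^t l(r)\,dN_r\bigr)\,ds\;+\;\text{(proxy contribution)}\right]
\end{equation}
should solve \eqref{ma3}. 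More precisely, since $\tilde{\mathcal{L}}_{\alpha,t}$ already has a known solution operator (the unique solution $v$ of \eqref{ma} exists by the discussion preceding Proposition \ref{prop:pass_to_lim}), I would define $w$ by conditioning on the Poisson path: between consecutive jump times one propagates by the $\tilde{\mathcal{L}}_{\alpha,t}$-semigroup, and at each jump time one composes with the shift by $l(t)$. Averaging over $N$ and using that $\E[N_{t}-N_{s}]=\lambda(t-s)$ makes the finite difference term $\lambda\delta_{l(t)}w$ appear in the integral equation \eqref{int2}; this is exactly the mechanism referenced in the remark after the definition of $B_b(0,T;C_c^\infty(\R^N))$, and it is why that function space (bounded measurable in time, uniformly compactly supported, smooth in space) is the natural one — the composition with random shifts preserves it pathwise, and boundedness lets us interchange $\E$ with the time integral and with spatial derivatives up to order two.

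The key points to verify are then: (i) \emph{regularity} — $D^\zeta_x w$ is bounded and continuous on $[0,T]\times\R^N$ for $|\zeta|\le 2$; this follows because $D^\zeta_x$ commutes with the random shift and with $\tilde{\mathcal L}_{\alpha,t}$, so $D^\zeta_x w$ is the analogous average built from $D^\zeta_x\tilde f$, which is again in $B_b(0,T;C^\infty_c(\R^N))$, and dominated convergence gives continuity in $(t,x)$; (ii) \emph{the integral identity} \eqref{int2} with $L_s$ replaced by $\tilde{\mathcal L}_{\alpha,s}+\lambda\delta_{l(s)}$ — here one conditions on the first jump time of the Poisson process, uses the known integral identity for the $\tilde{\mathcal L}_{\alpha,s}$-solution on the interjump interval, and the strong Markov property of $N$; (iii) \emph{uniqueness} — this is immediate from the maximum principle, Theorem \ref{thm:max_princ}, applied to the operator $\tilde{L}_t:=\tilde{\mathcal L}_{\alpha,t}+\lambda\delta_{l(t)}$, once one checks this operator still falls under its hypotheses (it does: the extra term is a bounded perturbation of the nonlocal part with $c=0$, $b=0$, and $Q,\gamma$ bounded; note $\delta_{l(t)}$ can be absorbed into the nonlocal integral against a bounded measure, or handled directly since it only improves the sign analysis at an interior maximum).

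The estimates \eqref{eq:Estimates_cv} for $w$ with the \emph{same} constants are the heart of the matter and the step I expect to be the main obstacle. The mechanism, again from \cite{Krylov:Priola17}, is a scaling/compactness argument: one discretizes, replacing the single jump of size $l(t)$ by $n$ jumps of size $l(t)/n$ at rate $n\lambda$, so that in the limit $n\to\infty$ the averaged equation converges — but at each finite $n$, the solution is literally an average (a convex-type combination governed by the Poisson law) of space-shifted solutions of the \emph{unperturbed} equation \eqref{ma}, each of which satisfies \eqref{eq:Estimates_cv} with constants $C_\beta, C_p$. Since the relevant seminorms $[\cdot]_{C^{\alpha+\beta}_{d,A}}$, $[\cdot]_{\dot W^{\alpha,p}_{d,A}}$ and the sup-norm are translation invariant and convex (Minkowski's inequality for the $L^p$ one, and the triangle inequality / sublinearity for the Hölder one), averaging does not increase them; hence $w$ inherits the same bounds. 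The delicate bookkeeping is to make the convergence $w_n\to w$ rigorous in a topology strong enough to pass the seminorm bounds to the limit (uniform convergence of $w_n$ and its first two spatial derivatives on compacts, plus the uniform compact support and boundedness from $B_b(0,T;C^\infty_c(\R^N))$, which gives weak-$\ast$ compactness for the $L^p$ estimate and Arzelà–Ascoli for the Hölder one), and to check that the discretized operators really are averages of shifts of \eqref{ma} — i.e.\ that inserting the proxy $\tilde{\mathcal L}_{\alpha,t}$ and a pure-shift jump mechanism commute in the right way. Granting the convergence, the lemma follows.
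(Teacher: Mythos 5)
Your overall route is the same Poisson-averaging argument the paper uses: for $\lambda>0$ set $X_t=\int_0^t l(r)\,d\pi_r$ with $\pi$ a Poisson process of intensity $\lambda$, solve the unperturbed problem \eqref{ma} pathwise with the shifted source $\tilde f(t,x-X_t)$, put $w(t,x)=\E[v(t,x+X_t)]$, and let the compensator formula produce the term $\lambda\delta_{l(t)}w$ in the integral identity; the estimates with the \emph{same} constants then follow immediately from translation invariance of the seminorms plus Jensen's inequality and Fubini. This last point is where your proposal goes astray: you flag it as the main obstacle and insert a discretization, replacing the single jump of size $l(t)$ by $n$ jumps of size $l(t)/n$ at rate $n\lambda$, followed by a compactness passage. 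That limit does not give the finite-difference operator at all: $n\lambda[\phi(x+l(t)/n)-\phi(x)]\to\lambda\langle l(t),D_x\phi\rangle$, a drift. No limit is needed here — your own observation ``the solution is literally an average of space-shifted solutions of \eqref{ma}'' is already the complete argument at $n=1$, and the Arzel\`a--Ascoli/weak-$\ast$ machinery belongs to the subsequent $\varepsilon\to 0$ step (Proposition \ref{prop:pass_to_lim}), not to this lemma.

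Two further gaps. First, uniqueness: Theorem \ref{thm:max_princ} as stated does not cover $\tilde{\mathcal L}_{\alpha,t}+\lambda\delta_{l(t)}$ (its nonlocal part is tied to the stable measure $\nu_\alpha$), and your ``improves the sign analysis at a maximum'' remark is only valid for $\lambda\ge 0$: when $\lambda<0$ the term $\lambda[w(t,x+l(t))-w(t,x)]$ is nonnegative at a maximum point and the argument breaks, while the lemma is claimed for every real $\lambda$. The paper avoids this by absorbing $\lambda\delta_{l(t)}w$ into the source, applying Theorem \ref{thm:max_princ} to the unperturbed equation, and closing a contraction on time intervals of length at most $1/(4|\lambda|)$, then iterating — a scheme insensitive to the sign of $\lambda$. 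Second, your device for $\lambda<0$ at the level of existence, a ``compound-Poisson process with a signed weight,'' is not an actual construction (there is no Poisson process of negative intensity); the paper instead reruns the argument with the opposite random shift $\tilde f(t,x+X_t)$. Both issues are repairable, but as written the negative-$\lambda$ case and the uniqueness step are genuine gaps.
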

\begin{proof}
Let us assume for the moment that $\lambda>0$. We can define
\[ X_t = \int_{0}^t l(r)\, d \pi_r\]
where  $\{\pi_t\}_{t\ge0}$ is a scalar Poisson process of intensity $\lambda$ defined on a probability space $\left(\Omega,\mathcal{F},\mathbb{P}\right)$.
We notice that the random shifted source $\bar{f}(t,x):=\tilde{f}(t,x-X_t)$ is again in $B_b\left(0,T;C^\infty_0(\R^N)\right)$. We have omitted to write the dependence on the random parameter $\omega$ for notational simplicity. We again emphasise that, from the very nature of the Poisson process involved, the modified (random) source $\bar f $ has jumps in time.
Then, for (almost every) fixed $\omega$ in $\Omega$, we already know that there exists a unique solution $v$ to \eqref{ma}, replacing therein $\tilde f$ with the random source $\bar{f}$, depending on  $\omega$ as parameter. Moreover, thanks to the invariance for translations of the involved norms, it follows from \eqref{eq:Estimates_cv} that
\begin{equation}\label{w11}
\begin{split}
   \sup_{(t,x)\in[0,T]\times\R^N}|v(t,x)| \, &\le \, T \sup_{(t,x)\in\R^N_T}|\tilde{f}(t,x)|; \\
    \sup_{t\in [0,T]} [v(t,\cdot)]_{C^{\alpha+\beta}_{d,A}(\R^N)} \, &\le\,C_\beta \sup_{t\in (0,T)} [\tilde{f}(t,\cdot)]_{C^\beta_{d,A}(\R^N)};\\
    \text{ if $A=A_0$}, \qquad [v]_{\dot{W}^{\alpha,p}_{d,A}(\R^N_T)} \, &\le  \, C_p \|   \tilde{f}  \|_{L^p(\R^N_T)}.
\end{split}
\end{equation}
It is not difficult to check (cf.\ \cite{book:Applebaum09}) that the solution $v$ is given by
\[
v(t,x) \, = \, \int_0^t\int_{\R^N} \tilde{f}(s,x - X_s + y) \, \mu_{ {s,t}} (dy)ds,
\]
where $\mu_{s,t}$ is the law of the stochastic integral
\[I_{s,t} \, := \, \int_s^t e^{rA}\sigma\, dZ_r,\]
with $\{Z_t\}_{t\ge 0}$ a Brownian motion if $\alpha=2$ or an $\alpha$-stable process with L\'evy measure $\nu_\alpha$, otherwise.

For each $x \in \R^N$, the stochastic process  $ (v(t,x))_{t \in [0,T]} $ has continuous paths ($\P$-a.s.) and it is ${\cal F}_t$-adapted where  ${\cal F}_t$ is the completed  $\sigma$-algebra generated by the random variables $\pi_s$, $0 \le s \le t$. For fixed $x \in \R^N$, let us introduce the process $(v(t,x+ X_t))_{t \in [0,T]}$ which is given by
\[
 v(t,x+ X_t) = \int_0^t\int_{\R^N} [\tilde{f}(s,x + X_t -  X_s + y) \mu_{ {s,t}} (dy)ds.
\]
It is not difficult to check that it is ${\cal F}_t$-adapted and   it has  c\`adl\`ag paths.

Denoting by $\sigma_n, \ n\in \{0,\dots, \pi_t\} $ the jump times of the process $(X_s)_{s\in [0,t]}$ and setting as well $\sigma_0=0 $, we have that $X_s$ is constant for $s\in[\sigma_n,\sigma_{n+1}\wedge t )$.  One then derives that:
\begin{align}
\label{eq:1_OU}
v(t,x+ X_t&)  \,= \,v(t,x+ X_t)-v(0,x)\notag\\
&= \,\sum_{n=0}^{\pi_t-1} \Bigl[v(\sigma_{n+1}^-,x+X_{\sigma_{n+1}^-})-v(\sigma_n,x+X_{\sigma_n})+v(t,x+X_t)\notag\\
&\qquad-v(\sigma_{\pi_t},x+X_{\sigma_{\pi_t}})+v(\sigma_{n+1},x+X_{\sigma_{n+1}})-v(\sigma_{n+1}^-,x+X_{\sigma_{n+1}^-}) \Bigr]\notag\\
&= \, \int_0^t\left[\tilde{\mathcal{L}}_{\alpha,t}v(s,x+ X_s))+ \tilde f(s,x)\right]\, ds +\int_0^t g(s,x) \,d\pi_s,
\end{align}
where $g(s,x)=v(s,x+ l(s) + X_{s-})-v(s,x+ X_{s-})$ is precisely the contribution associated with the jump times. Here, $X_{s-}=\lim_{t\uparrow s}X_{t}$, $s>0$.
It is clear that $g(s,x)\neq 0$ if and only if $\pi_s$ has a jump at time $s$.
We then have:
\begin{equation}
\label{proof:eq1}
    \mathbb{E}\left[\int_0^tg(s,x)\, d\pi_s\right]  \, = \, \lambda \int_0^t \left[w(s,x+l(s))-w(s,x)\right]\, ds,
\end{equation}
where $w(s,x) = \mathbb{E}[v(s,x+X_s)]$. The above equality can be easily proven when $g$ is piece-wise constant and then extended to our more general context by standard approximation arguments (cf.\ Lemma $2.1$ in \cite{Krylov:Priola17}). Taking the expectation on both sides of Equation \eqref{eq:1_OU}, we find out that $w$ is an integral solution to \eqref{ma3}. Moreover by \eqref{w11} we obtain (using also the Jensen inequality and  the Fubini theorem)
\[
\begin{split}
[w]_{\dot{W}^{\alpha,p}_{d,A}(\R^N_T)}^{p} \, &= \, \sum_{i=0}^k \int_{\R^N_T} |\Delta^{\alpha_i,A}_{x_i}w(t,x)|^p \, dx dt \, = \, \sum_{i=0}^k \int_{\R^N_T} \left|\mathbb{E}\left[\Delta^{\alpha_i,A}_{x_i}v(t,x + X_t)\right]\right|^p \, dx dt \\
&\le \, \sum_{i=0}^k   \int_{\R^N_T} \E \left[ \left|\Delta^{\alpha_i,A}_{x_i}v(t,x + X_t)\right|^p\right] \, dx dt\\
&= \, \sum_{i=0}^k\E\left[\int_{\R^N_T} \left|\Delta^{\alpha_i,A}_{x_i}v(t,x + X_t)  \right|^p dx dt\right]\\
&=\, \sum_{i=0}^k {\mathbb E}\left[\int_{\R^N_T}  |\Delta^{\alpha_i,A}_{x_i}v(t,x') |^p\, dx' dt\right]\,
\le \, (C_p)^p  \|   \tilde{f}\|_{L^p(\R^N_T)}^p.
\end{split}
\]
A similar argument works as well for the Schauder estimates and the maximum principle. Uniqueness of solutions to \eqref{ma3} follows by Theorem \ref{thm:max_princ}. Indeed, if we assume for the moment that $\lambda T \le 1/4$, a solution $w$ to \eqref{ma3} is also a solution to Cauchy problem \eqref{ma} with source $\tilde{f}(t,x)+\lambda[w(t,x+l(t))-w(t,x)]$. Since $\tilde{f}$ is regular enough, we can then apply Theorem \ref{thm:max_princ} and, by a circular argument, conclude that
\begin{align*}
\sup_{(t,x)\in[0,T]\times\R^N}|w(t,x)| \, \le& \, T \Big(\sup_{(t,x)\in\R^N_T}|\tilde{f}(t,x)|+2\lambda\sup_{(t,x)\in[0,T]\times\R^N} |w(t,x)|\Big) \, \\
\le & \, 2T \sup_{(t,x)\in\R^N_T}|\tilde{f}(t,x)|,
\end{align*}
when precisely $\lambda T \le 1/4$. The above estimates clearly imply the uniqueness of solutions. By iteration of the above procedure by steps of size $1/(4 \lambda)$, we can claim the uniqueness of solutions over the full interval $[0,T]$.

By considering the opposite random shift $\bar{f}(t,x):=\tilde{f}(t,x+X_t)$, the same arguments above allows to obtain the result when $\lambda<0$.
\end{proof}

Now, by an easy iterative argument using the above Lemmas \ref{lemma:finite_diff} and \ref{lemma:measur_square_root}, it is not difficult to show that

\begin{prop}
\label{prop:fin_diff_perturb}
Fixed $\varepsilon>0$, let $\tilde{f}$ be in $B_b(0,T;C^\infty_c(\R^N))$. Then, there exists a unique solution $w_\varepsilon$ to
\begin{equation}\label{eq:fin_diff_perturb}
\begin{cases}
\partial_t w_\varepsilon(t,x) \, = \, \tilde{\mathcal{L}}_{\alpha,t} w_\varepsilon(t,x) + J^{\tilde{S}}_{t,\varepsilon}w_\varepsilon(t,x)+\tilde{f}(t,x);\\
w_\varepsilon(0,x) \, = \, 0,
\end{cases}
\end{equation}
where, we recall, the operator $J^{\tilde{S}}_{t,\varepsilon}$ has been defined in \eqref{eq:approx_fin_diff}. Moreover, the estimates in \eqref{eq:Estimates_cv} hold again with $w_\varepsilon$ instead of $v$ and with the same constants appearing before.
\end{prop}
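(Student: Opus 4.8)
The plan is to reduce \eqref{eq:fin_diff_perturb} to finitely many successive applications of Lemma \ref{lemma:finite_diff}. First, by Lemma \ref{lemma:measur_square_root} we may fix a measurable principal square root $\sqrt{\tilde S(\cdot)}\colon(0,T)\to\mathcal{S}_0(\R^N)$; since $\tilde S$ is bounded, each vector field $l_i(t):=\varepsilon\sqrt{\tilde S(t)}\,e_i$ ($1\le i\le N$) is then bounded measurable, with $|l_i(t)|\le\varepsilon\,\|\tilde S\|_\infty^{1/2}$. Reading off the definition in \eqref{eq:approx_fin_diff}, the perturbation $J^{\tilde S}_{t,\varepsilon}$ is therefore a \emph{finite} sum $\sum_{j=1}^{2N}\lambda^{(j)}\delta_{m^{(j)}(t)}$ of first order finite difference operators, with real coefficients $\lambda^{(j)}=\tfrac12\varepsilon^{-2}$ and bounded measurable shift vectors $m^{(j)}(\cdot)$ ranging over $\{\pm l_1(\cdot),\dots,\pm l_N(\cdot)\}$.

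Next I would introduce $\tilde{\mathcal{L}}^{(0)}_{\alpha,t}:=\tilde{\mathcal{L}}_{\alpha,t}$ and, recursively, $\tilde{\mathcal{L}}^{(j)}_{\alpha,t}:=\tilde{\mathcal{L}}^{(j-1)}_{\alpha,t}+\lambda^{(j)}\delta_{m^{(j)}(t)}$ for $1\le j\le 2N$, so that $\tilde{\mathcal{L}}^{(2N)}_{\alpha,t}=\tilde{\mathcal{L}}_{\alpha,t}+J^{\tilde S}_{t,\varepsilon}$, and prove by induction on $j$ that for every $\tilde f\in B_b(0,T;C^\infty_c(\R^N))$ the Cauchy problem $\partial_t w=\tilde{\mathcal{L}}^{(j)}_{\alpha,t}w+\tilde f$, $w(0,\cdot)=0$, admits a unique solution in the sense of Definition \ref{INT_SOL}, and that this solution satisfies \eqref{eq:Estimates_cv} with the very same constants $C_\beta,C_p$. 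The base case $j=0$ is precisely the well-posedness of \eqref{ma} together with the estimates \eqref{eq:Estimates_cv}, recalled before Proposition \ref{prop:pass_to_lim}. For the inductive step I would invoke Lemma \ref{lemma:finite_diff} with $\tilde{\mathcal{L}}^{(j-1)}_{\alpha,t}$ in the role of the proxy $\tilde{\mathcal{L}}_{\alpha,t}$ and $\lambda=\lambda^{(j)}$, $l=m^{(j)}$: this is licit since the probabilistic argument of Lemma \ref{lemma:finite_diff} uses of the proxy only its well-posedness in $B_b(0,T;C^\infty_c(\R^N))$, the validity of \eqref{eq:Estimates_cv} for it — both granted for $\tilde{\mathcal{L}}^{(j-1)}_{\alpha,t}$ by the inductive hypothesis — and its invariance under spatial translations, which is what allows the random shift of the source and keeps the path expansion \eqref{eq:1_OU} valid between consecutive jumps of the Poisson process; note that all of $\tilde{\mathcal{L}}^{(0)}_{\alpha,t},\dots,\tilde{\mathcal{L}}^{(2N)}_{\alpha,t}$ are spatially translation invariant, their coefficients being $x$-independent and finite differences commuting with shifts. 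Running the induction up to $j=2N$ and relabelling $w=:w_\varepsilon$ yields the solution and the estimates. For uniqueness it is cleanest to argue directly on \eqref{eq:fin_diff_perturb}: a solution $w$ solves \eqref{ma} with source $\tilde f+J^{\tilde S}_{t,\varepsilon}w$, and $\|J^{\tilde S}_{t,\varepsilon}w\|_\infty\le 2N\varepsilon^{-2}\|w\|_\infty$, so the maximum principle for \eqref{ma} — Theorem \ref{thm:max_princ} if $\alpha\in(0,2)$, its classical diffusive analogue if $\alpha=2$ — yields, on any time interval of length $T_0$ small enough that $2N\varepsilon^{-2}T_0\le\tfrac12$, the bound $\sup|w|\le 2T_0\sup|\tilde f|$, hence uniqueness there; a finite iteration over $[0,T]$ then gives uniqueness on the whole interval.

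The point that requires genuine care — and which I expect to be the only real obstacle — is the legitimacy of using Lemma \ref{lemma:finite_diff} with a proxy that already carries finite difference operators, i.e.\ checking that nothing in its proof (the It\^o-type decomposition \eqref{eq:1_OU}, the representation $w(s,x)=\mathbb{E}[v(s,x+X_s)]$, and the Jensen plus Fubini passage to the anisotropic norms) appeals to the particular second order / $\alpha$-stable-integral structure of the proxy beyond spatial translation invariance and the a priori bounds. Once this robustness is granted the iteration is pure bookkeeping, and, crucially, the constants do not deteriorate because Lemma \ref{lemma:finite_diff} reproduces the same $C_\beta,C_p$ at each of the $2N$ steps. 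Equivalently, and perhaps more transparently, one may dispense with the recursion: run $2N$ independent time-inhomogeneous Poisson processes $\pi^{(1)},\dots,\pi^{(2N)}$ of respective intensities $\lambda^{(j)}$, set $X_t:=\sum_{j=1}^{2N}\int_0^t m^{(j)}(r)\,d\pi^{(j)}_r$, solve \eqref{ma} for almost every $\omega$ with the random shifted source $\tilde f(t,\cdot-X_t)$ to get $v$, and put $w_\varepsilon(t,x):=\mathbb{E}[v(t,x+X_t)]$; the single computation of Lemma \ref{lemma:finite_diff} then shows at once that $w_\varepsilon$ solves \eqref{eq:fin_diff_perturb} — each jump of $\pi^{(j)}$ contributes the generator term $\lambda^{(j)}\delta_{m^{(j)}(t)}$, whose sum over $j$ is exactly $J^{\tilde S}_{t,\varepsilon}$ — and that it inherits \eqref{eq:Estimates_cv} with the same constants.
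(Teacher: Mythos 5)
Your argument is exactly the one the paper intends: the paper disposes of this proposition with the remark that it follows ``by an easy iterative argument using Lemmas \ref{lemma:finite_diff} and \ref{lemma:measur_square_root}'', i.e.\ writing $J^{\tilde S}_{t,\varepsilon}$ as a finite sum of first-order finite differences with bounded measurable shifts (via the measurable square root) and adding them one at a time through Lemma \ref{lemma:finite_diff}, which is precisely your induction; your observations that the proxy augmented by previously added finite differences still satisfies the hypotheses actually used in the proof of Lemma \ref{lemma:finite_diff} (translation invariance, well-posedness, the same constants), and that uniqueness follows from the maximum principle on short time intervals, fill in the details correctly. Your alternative single-pass argument with $2N$ independent Poisson processes is a legitimate streamlining of the same mechanism rather than a different route.
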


In order to conclude and show Proposition \ref{prop:pass_to_lim}, we are interested now in understanding what happens when we let $\varepsilon$ goes to zero in \eqref{eq:fin_diff_perturb}. In particular, we want the corresponding second order term in \eqref{eq:approx_fin_diff} to appear at the limit.

\begin{proof}[Proof of Proposition \ref{prop:pass_to_lim}]
Let us recall from Proposition \ref{prop:fin_diff_perturb} that there exists a family $\{w_\varepsilon\}_{\varepsilon>0}$ such that $w_\varepsilon\colon [0,T]\times\R^N\to \R$ is the unique solution to \eqref{eq:fin_diff_perturb}. Moreover, it holds that
\begin{equation}
\label{eq:proof_max_princ}
\sup_{(t,x)\in[0,T]\times\R^N}|w_\varepsilon(t,x)| \, \le \, T \sup_{(t,x)\in\R^N_T}|f(t,x)|.
\end{equation}
For a multi-index $\gamma$ in $\N^N$ and $h>0$, let us denote by $\Delta^\gamma_h$ the iteration of the finite difference operators
\[\Delta_h^i\phi(x) \, = \, \frac{\phi(x+he_i)-\phi(x)}{h}, \quad h>0, \, i\in \llbracket 1,N\rrbracket.\]
We then notice that $\Delta_h^\gamma w_\varepsilon(t,x)$ solves again \eqref{eq:fin_diff_perturb} with $\Delta_h^\gamma f$. Using that $f$ is in $B_b(0,T;C^\infty_c(\R^N))$ and the maximum principle in \eqref{eq:proof_max_princ}, it then follows that $w_\varepsilon$ is smooth in space and $D^\gamma_xw_\varepsilon$ is bounded in $t,x,\varepsilon$. Equation \eqref{eq:fin_diff_perturb}, to be understood in its integral form, together with \eqref{eq:proof_max_princ} then gives that those derivatives are themselves Lipschitz continuous in time, uniformly in $x,\varepsilon$.  This precisely gives that the family $\{w_\varepsilon\colon \varepsilon>0\}$ is equi-Lipschitz continuous on any compact subset $K$ of $[0,T]\times\R^N$. Similar arguments hold as well for the spatial derivatives of $w_{\varepsilon}$.

We can now apply the Arzel\`a-Ascoli theorem to $w_\varepsilon$ showing the existence of a sub-sequence $\{w_{\varepsilon_n}\}_{n \in \N}$ which converges uniformly on any compact set to a function $w\colon [0,T]\times \R^N\to \R$. Moreover, $w$ is smooth in space and all its space derivatives are Lipschitz continuous on any compact set. Similarly, any  derivative in space of $w_{\varepsilon_n}$ tends to the respective derivative of $w$, uniformly on the compact sets. Passing to the limit as $n\to \infty$ along the sub-sequence $\{\varepsilon_n\}_{n\in \N}$ in the estimates in \eqref{eq:Estimates_cv}, we immediately find out that they hold again with $w$ instead of $v$ and with the same constants appearing before. Letting $\varepsilon_n$ goes to zero in Cauchy problem \eqref{eq:fin_diff_perturb}  (written in the  integral form), we can also conclude that $w$ solves
\[  \partial_t w(t,x) \, = \, \tilde{\mathcal{L}}_{\alpha,t} w(t,x) + {\rm Tr} \left(\tilde{S}(t)D^2_x w(t, x) \right) +\tilde{f}(t,x).
\]
Indeed, the dominated convergence theorem immediately implies that
\[\lim_{n\to +\infty}\int_0^t J^{\tilde{S}}_{t,\varepsilon_n} w_{\varepsilon_n}(s,x)\, ds \, = \, \int_{0}^t\left[{\rm{Tr}}\left( \tilde{S}(s)D^2_xw(s,x)\right)\right] \, ds.\]
Similarly, it also holds that
\[\lim_{n\to +\infty}\int_{0}^t\left[{\rm{Tr}}\left( BD^2_xw_{\varepsilon_n}(s,x)\right)\right] \, ds \, = \, \int_{0}^t\left[{\rm{Tr}}\left( BD^2_xw(s,x)\right)\right] \, ds,\]
when $\alpha=2$. On the other hand (i.e.\ $\alpha\neq 2$), we have that
\begin{multline*}
    \int_0^t \tilde{\mathcal{L}}_{\alpha,t}w_{\varepsilon_n}(s,x)\, ds \, = \, \int_0^t\int_{|\sigma z|\ge 1}\left[w_{\varepsilon}(s,x+e^{sA}\sigma z)-w_{\varepsilon}(s,x)\right] \nu_\alpha(dz)\\
    +\int_0^t\int_{|\sigma z|<1}\left[w_{\varepsilon}(s,x+e^{sA}\sigma z)-w_{\varepsilon}(s,x)-\langle D_xw_{\varepsilon}(s,x),e^{sA}\sigma z\rangle\right] \nu_\alpha(dz).
\end{multline*}
While the first integral is clearly convergent by the bounded convergence theorem, we notice that
\[|w_{\varepsilon}(s,x+e^{sA}\sigma z)-w_{\varepsilon}(s,x)-\langle D_xw_{\varepsilon}(s,x),e^{tA}\sigma z\rangle| \, \le \, |e^{sA}\sigma z|^2\Vert D^2_xw_{\varepsilon}\Vert_\infty\]
and the dominated convergence theorem can thus be applied to the second integral, thanks to a uniform estimate for $D^2_xw_\varepsilon$ (cf.\ \eqref{eq:proof_max_princ}). Finally, the uniqueness of such solution $w$ follows immediately from the maximum principle (Theorem \ref{thm:max_princ}).
\end{proof}

\bibliography{bibli}
\bibliographystyle{abbrv}
\end{document}